\newcommand{\toukou}[1]{\ifx\TOUKOU\undefined\else{#1}\fi}%
\newcommand{\toukoudel}[1]{\ifx\TOUKOU\undefined{#1}\else\fi}%
\newcommand{\toukouchange}[2]{\ifx\TOUKOU\undefined{#1}\else{#2}\fi}%
 \newtheorem{theorem}{Theorem}[section]
 \newtheorem{proposition}[theorem]{Proposition}
 \newtheorem{lemma}[theorem]{Lemma}
\theoremstyle{definition}
 \newtheorem{remark}[theorem]{Remark}
 \newtheorem{example}[theorem]{Example}
\numberwithin{equation}{section}
\numberwithin{figure}{section}
\newcommand{\R}{\boldsymbol{R}}
\newcommand{\N}{\boldsymbol{N}}
\newcommand{\coef}{\operatorname{coef}}
\newcommand{\trans}[1]{{\vphantom{#1}}^t{\!#1}}
\renewcommand{\phi}{\varphi}
\newcommand{\inner}[2]{\left\langle{#1},{#2}\right\rangle}
\newcommand{\pmt}[1]{{\begin{pmatrix} #1  \end{pmatrix}}}
\newcommand{\abstractbun}
{Obtaining complete information about the shape of an object
by looking at it from a single direction
is impossible in general.
In this paper, we theoretically study 
obtaining differential geometric information 
of an object from orthogonal projections in
a number of directions.
We discuss relations between 
(1) a space curve and the projected curves 
from several distinct directions, and
(2) a surface and the apparent contours
of projections from several distinct directions,
in terms of differential geometry and
singularity theory.
In particular,
formulae for recovering certain information on the original curves or surfaces
from their projected images
are given.}%
\begin{document}
\begin{center}
{\large {\bf 
Capturing information on curves and surfaces\\
from their projected images}}
\\[2mm]
Masaru Hasegawa,
Yutaro Kabata
and
Kentaro Saji
\begin{abstract}
\abstractbun
\end{abstract}
\end{center}
\renewcommand{\thefootnote}{\fnsymbol{footnote}}
\footnote[0]{2010 Mathematics Subject classification. 
Primary 53A05; 
Secondary 53A04. 
}
\footnote[0]{Keywords and Phrases: 
contour, differential geometry, singularity theory, curve, surface, Koenderink's formulae, projection.}

\footnote[0]{
Partly supported by the
JSPS KAKENHI Grants numbered 16J02200 and 18K03301.}

\section{Introduction}\label{sec:intro}
As is well known via triangulation, when we look at a point from two known viewpoints,
we can then calculate where the point is.
Let us turn our attention to the case of a surface.
When we look at a surface,
then we observe an apparent contour 
(a contour),
which gives us some information
about the surface.
In fact, reconstructing objects in 3-space from the information of apparent contours
is an important subject in the area of computer vision, computer graphics
and visual perception
\cite{cg, CB1992, c, multiview, FTP1995, 
GW1987, Koenderink1984, Koenderink1990, kd, Zheng}. 

One cannot obtain complete information
from a finite number of apparent contours, in general.
However,
to obtain the Gaussian curvature of a surface, information 
about the second order derivatives of the surface is required, and in
\cite{Koenderink1984, Koenderink1990}, Koenderink showed 
that one can obtain the Gaussian curvature of a surface 
as the product of the curvature of the contour and the normal curvature 
along a single direction.
While Koenderink's result needs more information than just
the apparent contour, that is, it needs the normal curvature,
this 
fact still suggests 
that we might be able to obtain some information 
about a surface from curvatures of small numbers of contours of the surface. 
It then is natural to ask how much information about the contour
is enough to get the desired information about a surface.

In this paper, we consider an orthogonal projection of $\R^3$ to a plane:
$$
\pi_\xi(x)=x-\inner{x}{\xi}\xi:\R^3\to \xi^\perp
$$
for a unit vector $\xi\in\R^3$.
The map $\pi_\xi$ is called the {\it orthogonal projection 
in the direction\/} $\xi$.
Our interest is in getting local information on surfaces (or curves)
from the curvatures of the contours (or the projected curves) with respect to
orthogonal projections.
In particular, we show how much 
information about contours (or image curves) is enough
to recover the lower degree terms of the Taylor expansions
of surfaces (or curves) at observed points.
Note that we also give explicit formulae for 
reconstructing basic information of curves and surfaces
from a finite number of projected images
(see Remark \ref{apprem}).
In addition, we construct some examples
of sets of different surfaces
whose information with respect to contours
for certain orthogonal projections
is exactly the same 
(Figures \ref{fig:twocontour1}, \ref{fig:twocontour2}, \ref{fig:twocontour3}).
See \cite{cg,c,dgh1,dgh2,dgh3,multiview,kd} 
for other approaches to these kinds
of considerations.

\begin{remark}\label{apprem}
Explicit formulae for reconstructing information about surfaces and curves from 
their projected images are useful tools in practical settings.
In addition to Koenderink's famous result \cite{Koenderink1984, Koenderink1990} 
explained above, \cite{GW1987} provided
a formula for recovering a surface from continuous data of the apparent contours.
Their works have received attention in the context of 
visual perception and computer vision
(cf. \cite{cg, CB1992, c, FTP1995, Zheng}).
The formulae in the present paper are certain kinds of expansions of 
previous results \cite{Koenderink1984, Koenderink1990,GW1987},
which are useful for reconstructing objects from a just few static images. 
We believe that our formulae are ready for use in practical applications
where such a reconstruction is needed.
\end{remark}

Throughout the paper, we use the following notation
for the Taylor coefficients of a given function.
For a $C^\infty$ function $\psi:I\to\R$, we set
$$
(\coef_0(\psi,t,k)=)
\coef(\psi,t,k)=\left(
\psi(0),\psi'(0),\dfrac{\psi''(0)}{2},\ldots,\dfrac{\psi^{(k)}(0)}{k!}
\right)
$$
(${~}'=d/dt$ and $\psi^{(i)}=(\psi^{(i-1)})'$ for $i=1,2,\ldots$),
namely, if $h=a_0+\sum_{i=1}^k(a_i/i!)t^i$,
then $$\coef(\psi,t,k)=(a_0,a_1,a_2/2,\ldots,a_k/k!).$$
The data $\coef(\psi,t,k)$ is called the 
{\it $k$-th order information of\/ $\psi$ $($at\/ $0)$}.
We remark that 
the $k$-th order information of
the given function $\psi$ at $0$
represents 
the $k$-jet of $\psi$ at $0$
in the terminology of singularity theory
(cf. \cite{Izumiya-book}).

\subsection{Projections of curves}
Let $I$ be an open interval containing $0$,
and let $\gamma: I \to \R^3$ ($\gamma(0)=(0,0,0)$)
be a given unknown regular $C^\infty$ curve 
whose curvature does not vanish at $0$.
We remark that $\gamma$ has the orientation induced from that of $I$.
Rotating the coordinate system of $\R^3$ if necessary, 
for any $k\in\N$,
we may assume that $\gamma$ is locally written around $0$ as
\begin{equation}\label{eq:gamma}
\gamma(t)
=\left(
t,\ \sum_{i=2}^k \dfrac{a_i}{i!}t^i,\ 
\sum_{i=3}^k \dfrac{b_i}{i!}t^i\right)+(O(k+1),O(k+1),O(k+1)),
\end{equation}
where $a_i,b_i\in \R$ $(i=2,\ldots,k)$,
and $O(k+1)$ stands for the terms whose degrees are
greater than $k$.
Specifically, $a_2$ and $b_3$ are important values of the space curve:
the curvature and the torsion at $0$.
Set $\gamma_\xi=\pi_\xi\circ \gamma$
for a unit vector $\xi\in \R^3$.
Our aim is to investigate how many conditions are enough
to recover the above coefficients
in terms of the curvatures of $\gamma_{\xi}$ using
a number of distinct directions $\xi$.

Since the setting is complicated for general
choices of projection directions,
we focus on the two singular cases
where the kernel direction $\xi$ of an orthogonal projection
is geometrically restricted.
The following are our settings, and also abstracts of the results
which will be given in Section \ref{sec:cur}:

\begin{itemize}
\item
We take two linearly independent vectors
$\xi_1,\xi_2$, where each
projected curve $\gamma_{\xi_i}$ $(i=1,2)$ has 
an inflection point at $0$.
This implies that $\xi_1,\xi_2$ lie in the osculating plane,
with the exception of the tangent line of $\gamma$
(Figure \ref{fig:oricurve1}).
Then the coefficients $a_i, b_{i}$
can be uniquely determined from the certain order of the information of 
the curvature functions of 
$\gamma_{\xi_i}$ $(i=1,2)$ at $0$ (Theorem \ref{prop:211}).
Namely, the coefficients $a_i, b_i$
can be uniquely determined by the information of 
the derivatives of the curvatures of the two projected curves.

\item
We take $\xi_1$ as a tangent vector of $\gamma$ at $0$.
Then,
$\gamma_{\xi_1}=\pi_{\xi_1}\circ\gamma$ has a singular point at $0$
(Figure \ref{fig:oricurve2}).
We also take another vector $\xi_2$.
Then the coefficients $a_i, b_{i}$ ($i\le 5$)
can be uniquely determined by the information of 
the curvature functions of 
$\gamma_{\xi_i}$ $(i=1,2)$ at $0$.
Namely, the coefficients $a_i, b_i$ ($i\le 5$)
can be uniquely determined from the curvature functions of two projected curves
from the tangential direction and another direction (Theorem \ref{prop:212}).
The notion of the cuspidal curvature of a singular 
plane curve (introduced in \cite{suyo}), especially, 
plays an important role.
\end{itemize}

\subsection{Projections of surfaces}
Let $U$ be an open subset of $\R^2$ containing $0=(0,0)$,
and let $f: U \to \R^3$ ($f(0)=(0,0,0)$)
be a given unknown regular $C^\infty$ surface.
Without loss of generality,
we may assume that
$f$ is given by
\begin{equation}\label{eq:surfsiki}
f(u,v)=\big(u,v,h(u,v)\big),\quad
h(u,v)=
 \dfrac{a_{20}}{2}u^2
+\dfrac{a_{02}}{2}v^2
+\sum_{i+j=3}^k \dfrac{a_{ij}}{i!j!}u^iv^j+O(k+1),
\end{equation}
where $a_{ij}\in \R$ $(i,j=0,1,2,\ldots,k)$.
We call
$a_{20},a_{02}$ (respectively, $a_{30}$, $a_{21}$, $a_{12}$, 
$a_{03}$)
the {\it second order\/} (respectively, the {\it third order\/}) 
information of $f$ at $0$.
Taking vectors which are tangent to the
image of $f$ at $0$, we consider
apparent contours of $f$ projected from
the directions of these vectors.
For tangent vector $\xi$ of the image of $f$ at $0$,
we set $f_\xi=\pi_\xi\circ f$.
We call the set $S$ of singular points 
the {\it contour generator\/} ({\it with respect to\/ $\xi$}), 
and $f_\xi(S)$ 
the {\it contour\/} ({\it of\/ $f$ with respect to\/ $\xi$}).

The following are abstracts of the results on surfaces
which will be given in Section \ref{sec:surf}:
\begin{itemize}
\item 
We take three
``general'' (respectively, four ``general'') distinct directions.
Then the second order (respectively, the third order) 
information of a surface 
is 
uniquely determined by the $0$-th order (respectively, 
the first order)
information of the curvatures of
the contours with respect to the directions.
Moreover, 
formulae on the relations between the 
information of the surfaces and the curves
are explicitly given (Theorem \ref{thm:threegandm} (respectively, 
the formula \eqref{eq:a3121})).
See \eqref{eq:sinsinsin} for the meaning of general
distinct directions.
We remark that knowing the second order information
is the same as knowing the pair of
values of the mean and Gaussian curvatures.

\item
We give an example of a pair of different surfaces
having the same information from the
curvatures of contours with respect to
two distinct directions.
Namely,
a surface $f$ with
two distinct directions $(\xi_1,\xi_2)$
and another surface $\tilde f$ with
two distinct directions 
$(\tilde \xi_1,\tilde \xi_2)$ are constructed,
such that
the information on the two contours of $f$ with respect to
$\xi_1$ and $\xi_2$ is
the same as 
the information on the two contours of $\tilde f$ with respect to
$\tilde\xi_1$ and $\tilde\xi_2$
(Example \ref{ex:contoursame} and 
Figures \ref{fig:twocontour1}, \ref{fig:twocontour2}, 
\ref{fig:twocontour3}).

\item We show that if the Gaussian curvature
is positive, then there exist two directions
such that the product of the contours of
these directions gives the Gaussian curvature
(Proposition \ref{prop:contconj}).
\end{itemize}

\begin{remark}
From the above results,
we see that
in order to judge 
the sign of the Gaussian at an observed point of a given surface,
looking at it along general three directions
in the tangent plane
is necessary and sufficient.
\end{remark}

\section{Projections of space curves}\label{sec:cur}
Let $\gamma: I \to \R^3$ 
be a
$C^\infty$ curve ($\gamma(0)=(0,0,0)$), and let
$\gamma_\xi=\pi_\xi\circ \gamma$
for $\xi$ with $\pi$ given as in the introduction.
We assume that the curvature of $\gamma$ 
does not vanish at $0$.
We consider the following two cases.
The first case is that the projection curve $\gamma_\xi$ has 
an inflection point,
namely, the vector $\xi$ lies in the osculating plane.
The second case is that one of the 
projection curve $\gamma_\xi$ has 
a singular point,
namely, the vector $\xi$ is tangent to $\gamma$ at $0$.

\subsection{Projections in the osculating plane}\label{oscu}
In this section, we consider the case that 
the curvature of $\gamma$ does not vanish and
$\gamma_\xi$ has an inflection point at $0$.
Then it holds that
$\xi$ lies
in the osculating plane, except for the tangent line of $\gamma$ at $0$.
We remark that $\gamma$ has the orientation induced from that of $I$.
Then rotating the coordinate system of $\R^3$ if necessary, 
we may assume that $\gamma$ is written as in
\eqref{eq:gamma}
and
$\xi(\theta_j)=(\cos\theta_j,\sin\theta_j,0)$,
where $0<\theta_j<\pi$ $(j=1,2,\ldots)$.
We give the orientation of $\xi^\perp$ as follows:
We take a basis $\{X,Y\}$ of $\xi^\perp$.
We say that $\{X,Y\}$ is a positive basis if
$\{X,Y,\xi\}$ is a positive basis of $\R^3$.
We set the orientation of $\pi_{\xi(\theta_j)}\circ \gamma$
to agree with that of $\gamma$ (see Figure \ref{fig:oricurve1}).
We set $\pi_{\xi(\theta_j)}\circ\gamma=\gamma_{\theta_j}$,
and also we set
$s_j$ to be the arc-length of $\gamma_{\theta_j}$,
and set $\kappa_{\theta_j}$ to be the curvature of 
$\gamma_{\theta_j}\subset \xi^\perp$
as a curve in the oriented plane $\xi^\perp$.

\begin{figure}[!htb]
\centering
\includegraphics[scale=0.4, clip]{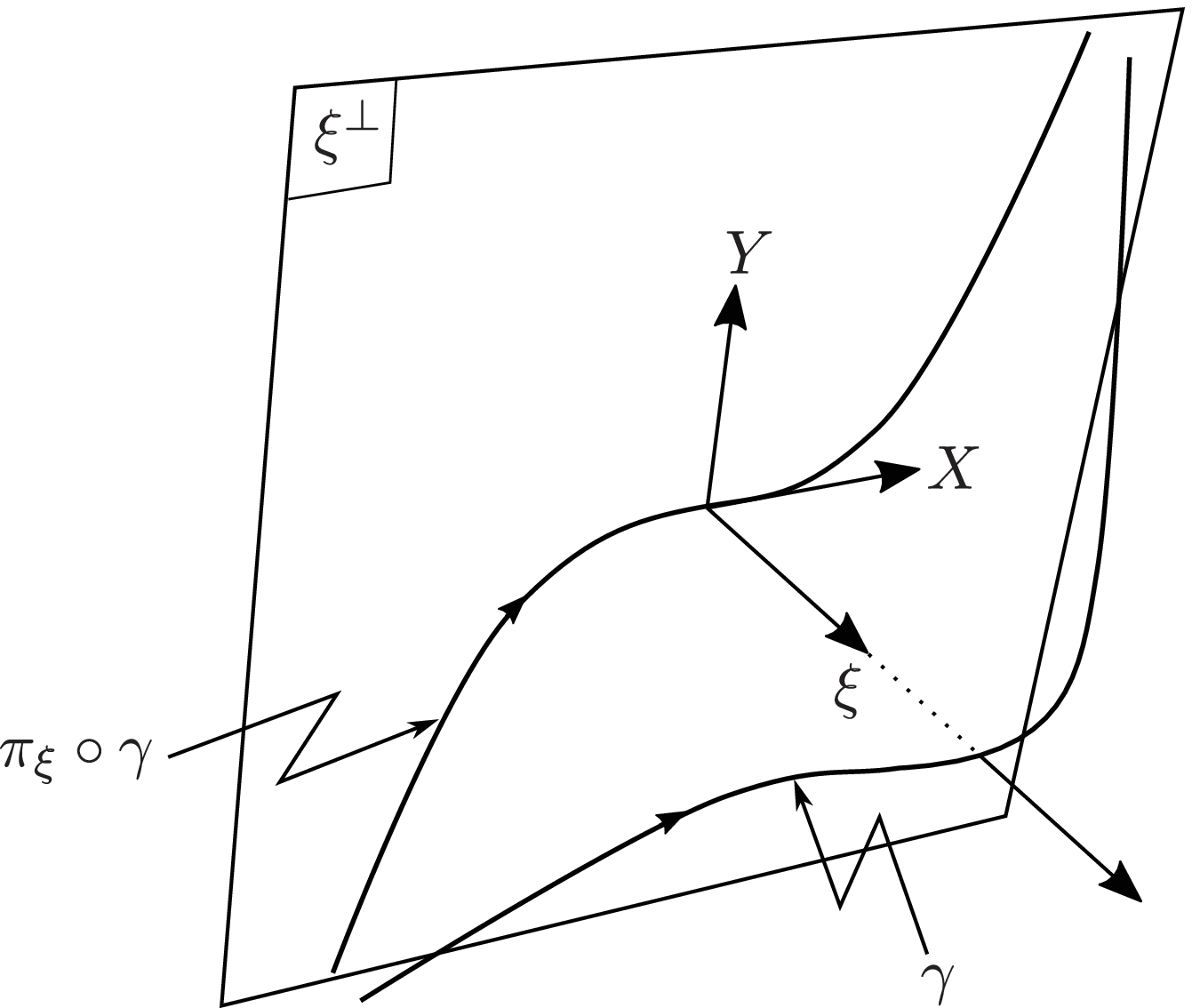}
\caption{Orientations of $\xi^{\perp}$ and $\pi_{\xi}\circ \gamma$.} 
\label{fig:oricurve1}
\end{figure}

Suppose that we are given the information of the curvatures of the contours from
two distinct directions $0<\theta_1,\theta_2<\pi$ 
($\theta_1\ne\theta_2$). 
Set $\phi=\theta_1-\theta_2$
and $\tilde\kappa_{\theta_j}=(d\kappa_{\theta_j}/ds_j(0))^{1/3}$.
We can determine the local information of space curves from
local information of projected curves
in two distinct directions:
\begin{theorem}\label{prop:211}
If\/
$(d\kappa_{\theta_1}/ds_1(0),d\kappa_{\theta_2}/ds_2(0))
\ne(0,0)$,
then the following hold.
\begin{enumerate}
\item $\theta_1, \theta_2$ and\/ $b_3$ are uniquely determined
from the second order information of\/ 
$\kappa_{\theta_1}, \kappa_{\theta_2}$ at\/ $0$
and\/ $\phi=\theta_1-\theta_2$. 
\item 
The coefficients\/
$a_{n-2}$ and\/ $b_n$ are uniquely determined
from the\/ $(n-1)$-st order information of\/ 
$\kappa_{\theta_1}, \kappa_{\theta_2}$ at\/ $0$
and\/ $\phi$
for\/ $n\ge4$. 
\end{enumerate}
\end{theorem}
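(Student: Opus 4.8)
The plan is to make everything explicit in a positively oriented basis of $\xi(\theta)^\perp$, reduce the problem to the signed curvature of a planar curve, and then run an induction in which the hypothesis $b_3\neq0$ is exactly what makes each step solvable.

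First I would fix the basis $X=(-\sin\theta,\cos\theta,0)$, $Y=(0,0,1)$ of $\xi(\theta)^\perp$; it is positive since $X\times Y=\xi(\theta)$. Writing $\gamma(t)=(t,y(t),z(t))$ as in \eqref{eq:gamma} and using $X,Y\perp\xi(\theta)$, the projected curve becomes the planar curve
\[
c(t)=\bigl(-t\sin\theta+y(t)\cos\theta,\ z(t)\bigr),
\]
whose first component starts at order $1$ and whose second starts at order $3$. Substituting into the planar curvature $\kappa=(p'q''-p''q')/(p'^2+q'^2)^{3/2}$ and using $y'(0)=z'(0)=z''(0)=0$, one gets $c'(0)=(-\sin\theta,0)$ and $\kappa_\theta(0)=0$, so $0$ is the inflection point forced by $\xi(\theta)$ lying in the osculating plane.

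Next I would compute the first arc-length derivative. A direct differentiation gives the identity
\[
\frac{d\kappa_\theta}{ds}(0)=-\frac{b_3}{\sin^3\theta},
\qquad\text{hence}\qquad
\tilde\kappa_\theta\sin\theta=(-b_3)^{1/3}
\]
is independent of $\theta$. The hypothesis is thus exactly $b_3\neq0$, which makes the cube roots $\tilde\kappa_{\theta_j}$ nonzero and well defined. Part (1) now follows by solving the two relations $\tilde\kappa_{\theta_1}\sin\theta_1=\tilde\kappa_{\theta_2}\sin\theta_2$ and $\theta_1-\theta_2=\phi$: substituting $\sin\theta_2=\sin(\theta_1-\phi)$ yields $\tan\theta_1=-\tilde\kappa_{\theta_2}\sin\phi/(\tilde\kappa_{\theta_1}-\tilde\kappa_{\theta_2}\cos\phi)$, which determines $\theta_1\in(0,\pi)$ uniquely, hence $\theta_2$, and finally $b_3=-(\tilde\kappa_{\theta_1}\sin\theta_1)^3$.

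For part (2) I would induct on $n$, assuming $\theta_1,\theta_2,b_3$ and all $a_i\ (i\le n-3)$, $b_i\ (i\le n-1)$ known. The key is to show that the $(n-2)$-nd arc-length derivative has the form
\[
\frac{d^{\,n-2}\kappa_\theta}{ds^{\,n-2}}(0)
=-\frac{b_n}{\sin^n\theta}+A_{n-2}(\theta)\,a_{n-2}+R_{n-2}(\theta),
\]
where the coefficient $A_{n-2}(\theta)$ is proportional to $b_3$ (because $a_{n-2}$ can only pair with the lowest coefficient $b_3$ of $z$, the reparametrization contributing through $d\kappa_\theta/ds(0)\propto b_3$ as well), and $R_{n-2}(\theta)$ gathers products of already-known lower coefficients. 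Writing this for $\theta_1$ and $\theta_2$ produces two linear equations in $a_{n-2},b_n$, whose data is contained in the $(n-1)$-st order information of $\kappa_{\theta_1},\kappa_{\theta_2}$. Their $2\times2$ determinant is a nonzero multiple of $A_{n-2}(\theta_1)\sin^n\theta_1-A_{n-2}(\theta_2)\sin^n\theta_2$; for $n=4$ one computes $A_2(\theta)\sin^4\theta=-6b_3\cot\theta$, so this difference equals $6b_3(\cot\theta_2-\cot\theta_1)=6b_3\sin\phi/(\sin\theta_1\sin\theta_2)$, nonzero since $b_3\neq0$ and $\theta_1\neq\theta_2$.

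I expect two difficulties. The laborious part is the bookkeeping that isolates $a_{n-2}$ and $b_n$ as the only new coefficients at order $n-2$ and controls the passage from $t$ to arc length, whose speed $v=\sqrt{p'^2+q'^2}$ injects the lower coefficients into every derivative; a clean recursion for the $t$-Taylor coefficients of $\kappa_\theta$ followed by conversion to $s$ should handle this. The genuinely delicate point is proving that $\theta\mapsto A_{n-2}(\theta)\sin^n\theta$ separates any two distinct $\theta_1,\theta_2\in(0,\pi)$, so that the determinant never vanishes; I would try to show this function reduces, as for $n=4$, to a first-degree expression in $\cot\theta$ (automatically injective on $(0,\pi)$), and it is precisely to resolve any residual degenerate configurations that the extra derivative inside the $(n-1)$-st order information is kept in reserve.
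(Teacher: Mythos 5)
Your strategy coincides with the paper's: compute $d\kappa_\theta/ds(0)=-b_3/\sin^3\theta$ in an explicit positive basis of $\xi(\theta)^\perp$, observe that the hypothesis is equivalent to $b_3\ne0$, recover $\theta_1$ (hence $\theta_2$) from the two first-derivative relations together with $\phi$, and then treat each higher order as a $2\times2$ linear system in $(a_{n-2},b_n)$. Part (1) and the case $n=4$ of part (2) are carried out correctly and match the paper's formulae (the paper extracts $\theta_1$ via $\cot^{-1}$ of the cube root of the ratio and gets $b_3$ from a trigonometric identity in $\phi$ alone, but your route through $\tan\theta_1$ and $b_3=-(\tilde\kappa_{\theta_1}\sin\theta_1)^3$ is equivalent).

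The gap is exactly where you flagged it: for $n\ge5$ the two claims on which your induction rests --- (i) that $a_{n-1},a_n$ do not occur in $d^{n-2}\kappa_\theta/ds^{n-2}(0)$ while $a_{n-2},b_n$ occur only linearly, and (ii) that the $a_{n-2}$-coefficient times $\sin^n\theta$ is degree one in $\cot\theta$ with slope a nonzero multiple of $b_3$ --- are stated as hopes, and they are the substance of the proof. The paper closes them by expanding $d^{n-2}(\kappa\circ t(s))/ds^{n-2}$ with the formula for derivatives of a composition, showing that the reparametrization derivatives $d^jt/ds^j$ are polynomials in the inner products $\langle\gamma_\theta^{(\ell_1)},\gamma_\theta^{(\ell_2)}\rangle$ with $\ell_1+\ell_2\le j+1$ (so they contribute no new coefficients; the dangerous term $\langle\gamma_\theta',\gamma_\theta^{(n-2)}\rangle$ drops out because the relevant first components vanish), and that among the determinants $\det(\gamma^{(j)},\gamma^{(k)},\xi_\theta)$ only $\det(\gamma',\gamma^{(n)},\xi_\theta)=-b_n\sin\theta$ and $\det(\gamma''',\gamma^{(n-2)},\xi_\theta)=(a_3b_{n-2}-b_3a_{n-2})\cos\theta$ carry the new coefficients; this yields the system with $b_n$-coefficient $-1/\sin^n\theta$ and $a_{n-2}$-coefficient a multiple of $b_3\cos\theta/\sin^{n+1}\theta$, whence the determinant is a nonzero multiple of $b_3\sin(\theta_1-\theta_2)$. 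Be warned that the bookkeeping is genuinely delicate: for $n=5$ the determinant $\det(\gamma''',\gamma^{(n-2)},\xi_\theta)$ vanishes identically and the $a_3b_3$ contribution enters instead through the reparametrization (as the explicit third-order expansion shows), so the heuristic that ``$a_{n-2}$ can only pair with $b_3$'' must be verified term by term rather than by a one-line degree count. Until this structural lemma is proved, your induction step is a plausible plan rather than a proof.
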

The explicit formulae for $\theta_1,\theta_2=\theta_1+\phi$
and $b_3$
are given as in \eqref{eq:theta1ex} and 
\eqref{eq:b3explicit} in the proof.
\begin{proof}
Let $e_1,e_2$ be an orthonormal basis of the osculating plane
of $\gamma$ at $0$.
Then the curvature $\kappa_{\theta_i}$ for
a general parameter $t$ is
$$
\kappa_{\theta_i}(t) = 
\dfrac{
\det\left(
\pmt{\gamma'(t)\cdot e_1\\ \gamma'(t)\cdot e_2},
\pmt{\gamma''(t)\cdot e_1\\ \gamma''(t)\cdot e_2}\right)}
{((\gamma'(t)\cdot e_1)^2+(\gamma'(t)\cdot e_2)^2)^{3/2}}
 = 
\dfrac{\det\big(\gamma'(t), \gamma''(t), \xi_{\theta_i}\big)}
{|\gamma_{\theta_i}'(t)|^3}\ 
\left('=\dfrac{d}{dt}\right).$$
We set $\alpha(t)=\det\big(\gamma'(t), \gamma''(t), \xi_{\theta_i}\big)$
and $\beta(t)=\gamma_{\theta_i}'(t)\cdot \gamma_{\theta_i}'(t)$.
Then noticing $\alpha(0)=0$, we have
\begin{align}
\kappa_{\theta_i}'(0) =& \alpha'(0)/\beta(0)^{3/2}\nonumber\\
\kappa_{\theta_i}''(0) =&
(-3 \alpha'(0) \beta'(0) + 
 \beta(0) \alpha''(0))/\beta(0)^{5/2}\label{eq:kappaibibun}\\
\kappa_{\theta_i}'''(0) =&
\dfrac{1}{4\beta(0)^{7/2}}
\Big(9 \alpha'(0) (5 \beta'(0)^2 - 2 \beta(0) \beta''(0)) \nonumber\\
&-18\alpha''(0)\beta(0)\beta'(0)+4\alpha'''(0)\beta(0)^2\Big)
\nonumber
\end{align}
and
\begin{align}
\alpha'(0) =& \det(\gamma'(0),\gamma'''(0),\xi_{\theta_i})\nonumber\\
\alpha''(0) =& \det(\gamma''(0),\gamma'''(0),\xi_{\theta_i})
+\det(\gamma'(0),\gamma^{(4)}(0),\xi_{\theta_i})\label{eq:abibun}\\
\alpha'''(0) =& 2\det(\gamma''(0),\gamma^{(4)}(0),\xi_{\theta_i})
+\det(\gamma'(0),\gamma^{(5)}(0),\xi_{\theta_i}).\nonumber
\end{align}
Since $0<\theta_{i}<\pi$, it holds that 
$\sin\theta_{i}\ne0$,
and by a direct calculation we have
\begin{align}\label{eq:coefkappa00}
&\coef(\kappa_{\theta_i},t,3)\\
=&
\Bigg(
0,-\dfrac{b_3}{\sin^2\theta_i},
-\dfrac{b_4\sin\theta_i+5 a_2 b_3 \cos\theta_i}{2\sin^3\theta_i},\nonumber\\
&-\dfrac{1}{6\sin^4\theta_i}
\Big(b_5\sin^2\theta_i\nonumber\\
&+(9 a_3 b_3+7 a_2 b_4) \cos\theta_i\sin\theta_i
+27 a_2^2 b_3\cos^2\theta_i\Big)
\bigg)\nonumber
\end{align}
for $i=1,2$.
Since $s_i$ is an arc-length parameter of $\gamma_{\theta_i}$,
$$
s_i(t)=\int^t_0 \beta(t)^{1/2}\,dt.
$$
We set $t(s_i)$ to be the inverse function of the above $s_i(t)$.
Then since $s_i'(t)=\beta(t)^{1/2}$, we see that
\begin{align*}
\dfrac{dt}{ds_i}(s_i(t))
=&
\dfrac{1}{s_i'(t)}
=
\dfrac{1}{\beta(t)^{1/2}},\\
\dfrac{d^2t}{ds_i^2}(s_i(t))
=&
-\dfrac{s_i''(t)}{(s_i'(t))^3}
=
-\dfrac{\beta'(t)}{2\beta(t)^2},\\
\dfrac{d^3t}{ds_i^3}(s_i(t))
=&
\dfrac{3(s_i''(t))^2-s_i'(t)s_i'''(t)}{(s_i'(t))^5}
=
\dfrac{2(\beta'(t))^2-\beta(t)\beta''(t)}{2\beta^{7/2}(t)}.
\end{align*}
On the other hand, by the definition of $\beta$,
$$
\coef(\beta,t,2)
=
\big(\sin^2\theta_i,
-2a_2 \sin\theta_i\cos\theta_i,
\cos\theta_i (a_2^2 \cos\theta_i-a_3 \sin\theta_i)
\big)
$$
holds.
Thus
\begin{align}
&\coef\left(
t(s_i),s_i,3\right)
=
\bigg(0,\dfrac{1}{\sin\theta_i},
\dfrac{a_2\cos\theta_i}{2\sin^3\theta_i},\label{eq:tsicoef}\\
&\hspace{5mm}
\dfrac{\cos\theta_i}{6\sin^5\theta_i}(3 a_2^2 \cos\theta_i+a_3 \sin\theta_i)
\bigg).
\nonumber
\end{align}
Moreover, by
\begin{align*}
\dfrac{d\kappa_{\theta_i}}{ds_i}(s_i)&=
\kappa_1'(t(s_i)) \dfrac{dt}{ds_i}(s_i)\\
\dfrac{d^2\kappa_{\theta_i}}{ds_i^2}(s_i)&=
 \kappa_1''(t(s_i)) \left(\dfrac{dt}{ds_i}(s_i)\right)^2+ 
 \kappa_1'(t(s_i)) \dfrac{dt^2}{d^2s_i}(s_i)\\
\dfrac{d^3\kappa_{\theta_i}}{ds_i^3}(s_i)&=
\kappa_1'''(t(s_i))\left(\dfrac{dt}{ds_i}(s_i)\right)^3 
+3  \kappa_1''(t(s_i)) \dfrac{dt}{ds_i}(s_i)\dfrac{dt^2}{d^2s_i}(s_i) \\
&\hspace{10mm}
+ \kappa_1'(t(s_i)) 
\dfrac{dt^3}{d^3s_i}(s_i)
\end{align*}
together with \eqref{eq:kappaibibun}, \eqref{eq:abibun},
\eqref{eq:coefkappa00} and \eqref{eq:tsicoef},
we have
\begin{align}
\label{eq:coefkappa01}
&\coef(\kappa_{\theta_{i}},s_{i},3)\\
=&
\Bigg(0,
-\dfrac{b_{3}}{\sin^3\theta_{i}},
-\dfrac{b_{4}\sin\theta_{i}+6 a_{2} b_{3} \cos\theta_{i}}
{2\sin^5\theta_{i}},\nonumber\\
&
-\dfrac{1}{6\sin^7\theta_{i}}
\Big(45 a_{2}^2 b_{3} \cos^2\theta_{i}+b_{5} \sin^2\theta_{i}\nonumber\\
&\hspace{5mm}+10 (a_{3} b_{3}+a_{2} b_{4}) \sin\theta_{i}\cos\theta_{i}
\Big)\Bigg).\nonumber
\end{align}
By \eqref{eq:coefkappa01}, the condition
$d\kappa_{\theta_1}/ds_1(0)\ne0$ or $d\kappa_{\theta_2}/ds_2(0)
\ne0$
implies $b_3\ne0$.
Thus 
$(d\kappa_{\theta_1}/ds_1(0),d\kappa_{\theta_2}/ds_2(0))
\ne(0,0)$
implies
$d\kappa_{\theta_1}/ds_1(0)\ne0$ and
$d\kappa_{\theta_2}/ds_2(0)\ne0$.
Taking another direction $\theta_2=\theta_1+\phi$ 
$(0<\theta_2<\pi)$,
we may consider $\kappa_{\theta_1},\kappa_{\theta_2},\phi$
to be known. 
Since the equation
$$
\dfrac{d\kappa_{\theta_1}/ds_1(0)}{d\kappa_{\theta_2}/ds_2(0)}
=
\dfrac{\sin^3(\theta_1+\phi)}{\sin^3\theta_1}
$$
can be solved as
\begin{equation}\label{eq:theta1ex}
\theta_1=\cot^{-1}
\left(
\dfrac{
\left(
\dfrac{d\kappa_{\theta_1}/ds_1(0)}
{d\kappa_{\theta_2}/ds_2(0)}\right)^{1/3}
-\cos\phi}
{\sin\phi}\right)\in(0,\pi),
\end{equation}
we obtain $\theta_1$ and $\theta_2$.

Furthermore, by \eqref{eq:coefkappa01}, it holds that
\begin{equation}\label{eq:coefkappa02}
\sin \theta_i =-\frac{\tilde{b}}{\tilde{\kappa}_{\theta_i}},\quad(i=1,2)
\end{equation}
where
$\tilde b=b_3^{1/3}$ and
$\tilde\kappa_{\theta_i}=(d\kappa_{\theta_i}/ds_i(0))^{1/3}$.
Substituting \eqref{eq:coefkappa02} into the trigonometric identity
$$
\cos^2{(\theta_1-\theta_2)}+\sin^2{\theta_1}+\sin^2{\theta_2}-2\sin{\theta_1}\sin{\theta_2}\cos{(\theta_1-\theta_2)}-1
=0,
$$
we have
\begin{equation}\label{eq:coefkappa03}
(\tilde\kappa_{\theta_1}^2
-2\cos{\varphi}\;
\tilde\kappa_{\theta_1}\tilde\kappa_{\theta_2}
+\tilde\kappa_{\theta_2}^2)\tilde b^2
-
\sin^2{\varphi}\; \tilde\kappa_{\theta_1}^2\tilde\kappa_{\theta_2}^2=0.
\end{equation}
Since $\theta_1\ne\theta_2$, it holds that $\sin\phi\ne0$.
Thus by $\tilde\kappa_{\theta_1}\tilde\kappa_{\theta_2}\ne0$,
we see that the equality \eqref{eq:coefkappa03} 
implies 
$\tilde\kappa_{\theta_1}^2
-2\cos{\varphi}\;
\tilde\kappa_{\theta_1}\tilde\kappa_{\theta_2}
+\tilde\kappa_{\theta_2}^2\ne0$.
Thus
\eqref{eq:coefkappa03} also implies that
we obtain $b_3$, 
since $\tilde\kappa_{\theta_1}$, 
$\tilde\kappa_{\theta_2}$, $\phi$ are known. In fact,
\begin{equation}\label{eq:b3explicit}
b_3=
\left(
\dfrac{
\sin^2{\varphi}\; \tilde\kappa_{\theta_1}^2\tilde\kappa_{\theta_2}^2
}{
\tilde\kappa_{\theta_1}^2
-2\cos{\varphi}\;
\tilde\kappa_{\theta_1}\tilde\kappa_{\theta_2}
+\tilde\kappa_{\theta_2}^2
}\right)^{3/2},\quad
\left(
\tilde\kappa_{\theta_i}=
\left(\dfrac{d\kappa_{\theta_i}}{ds_i}(0)\right)^{1/3}\quad(i=1,2)
\right).
\end{equation}
Thus the claim (1) holds.

Since the pair of equations
$$
\dfrac{d^2\kappa_{\theta_i}}{ds_i^2}(0)=
-\dfrac{b_{4}\sin\theta_i+6 a_{2} b_{3} \cos\theta_i}
{2\sin^5\theta_i}
\quad(i=1,2)
$$
is a linear system for $a_2,b_4$, by $\theta_1\ne\theta_2$
we obtain $a_2$ and $b_4$ from \eqref{eq:coefkappa01} if $b_3\ne0$.
Thus the claim (2) when $n=4$ holds.
Next, we show (2) when $n\geq5$.
We assume that
$\phi$,
$a_1,\ldots,a_{n-3}$ and $b_1,\ldots,b_{n-1}$ are known
for $n\geq5$.
Now we take the $(n-2)$-th derivative of $\kappa_{\theta_i}$ 
with respect to $s_i$.
That value of the derivative at $0$ is written in terms of 
$a_1,\ldots,a_{n}$ and $b_1,\ldots,b_{n}$.
In the formula of $d^{n-2}d\kappa_{\theta_i}/ds_i^{n-2}(0)$,
as a polynomial of 
$a_1,\ldots,a_{n},b_1,\ldots,b_{n}$,
we show that $a_{n-1},a_n$ do not appear,
and $a_{n-2},b_n$ appear only to the first power linear terms.
We set $\kappa_{\theta_i}=\kappa$ and $s_i=s$ for
simplicity, for the moment.
Let $t(s)$ be the inverse function of
$$
s=\int_0^t |\gamma_{\theta_i}'(t)|\,dt,
$$
where $'=d/dt$.
By the formula for differentiation of a composition
of functions (see \cite[($3_n$)]{mckie} or \cite[(3.56)]{gould} for example),
\begin{equation}\label{eq:comp}
\dfrac{d^{n-2}}{ds^{n-2}}
\kappa(t(s))
=
\sum_{k=0}^{n-2}
\dfrac{d^k\kappa}{dt^k}(t(s))
\dfrac{(-1)^k}{k!}
\sum_{j=0}^k
(-1)^j\pmt{k\\j} t(s)^{k-j}
\dfrac{d^{n-2}}{ds^{n-2}}(t(s)^j),
\end{equation}
and we look at the terms
$$
\dfrac{d^jt(s)}{ds^j}(0),\quad
\dfrac{d^j\kappa}{dt^j}(0),\quad
(j=n-4,n-3,n-2).
$$
Since
$dt/ds=|\gamma_{\theta_i}'|^{-1}$,
it holds that
\begin{align*}
\dfrac{d^{j}t}{ds^{j}}
&=
\dfrac{d^{j-1}}{ds^{j-1}}\left(
\dfrac{1}{|\gamma_{\theta_i}'(t(s))|}\right)
=
\dfrac{d^{j-1}}{ds^{j-1}}\left(
\inner{\gamma_{\theta_i}'(t(s))}
{\gamma_{\theta_i}'(t(s))}^{-1/2}
\right)\\
&=
-\dfrac{d^{j-2}}{ds^{j-2}}\left(
\inner{\gamma_{\theta_i}'(t(s))}
{\gamma_{\theta_i}'(t(s))}^{-3/2}
\inner{\gamma_{\theta_i}'(t(s))}
{\gamma_{\theta_i}''(t(s))}
\dfrac{dt(s)}{ds}
\right)\\
&=
-\dfrac{d^{j-2}}{ds^{j-2}}\left(
\inner{\gamma_{\theta_i}'(t(s))}
{\gamma_{\theta_i}'(t(s))}^{-2}
\inner{\gamma_{\theta_i}'(t(s))}
{\gamma_{\theta_i}''(t(s))}
\right).
\end{align*}
Continuing to calculate the derivative of the function by $s$,
we finally obtain a polynomial consisting of terms with 
some powers of
$\inner{\gamma_{\theta_i}'(t(s))}{\gamma_{\theta_i}'(t(s))}$
and
$$
\inner{\gamma_{\theta_i}^{(\ell_1)}(t(s))}
{\gamma_{\theta_i}^{(\ell_2)}(t(s))}\quad
(1\le\ell_1\le \ell_2\le j,\ 3 \le \ell_1+\ell_2 \le j+1).$$
This implies that
$a_{n-1},a_n,b_n$ do not appear in
$(d^jt(s)/ds^j)(0)$ $(j=n-4,n-3,n-2)$.
Moreover, although
$a_{n-2}$ may appear in
$(d^{n-2}t(s)/ds^{n-2})(0)$
from the term
$
\langle\gamma_{\theta_i}',\gamma_{\theta_i}^{(n-2)}\rangle,
$
it does not actually appear, since
$\gamma_{\theta_i}'(0)=(1,0,0)$
and the first component of $\gamma_{\theta_i}^{(n-2)}(0)$ is $0$.
On the other hand,
since
$\gamma$ is given by \eqref{eq:gamma},
\begin{equation}\label{eq:gammadiff}
\begin{split}
\det(\gamma',\gamma^{(n-2)},\xi_{\theta_i})
=&- b_{n-2}\sin\theta_i,\ 
\det(\gamma',\gamma^{(n-1)},\xi_{\theta_i})
=- b_{n-1}\sin\theta_i,\\
\det(\gamma',\gamma^{(n)},\xi_{\theta_i})
=&- \boxed{b_{n}}\sin\theta_i,\\
\det(\gamma'',\gamma^{(n-2)},\xi_{\theta_i})
=&a_2b_{n-2}\cos\theta_i,\ 
\det(\gamma'',\gamma^{(n-1)},\xi_{\theta_i})
=a_2b_{n-1}\cos\theta_i,\\
\det(\gamma''',\gamma^{(n-2)},\xi_{\theta_i})
=&(a_3b_{n-2}-b_3\boxed{a_{n-2}})\cos\theta_i,
\end{split}
\end{equation}
and
$a_{n-2},a_{n-1},a_n,b_n$ do not appear in 
$\det(\gamma',\gamma'',\xi_{\theta_i})^{(j)}(0)$
$(j=1,\ldots,n-3)$.
This implies that they also do not appear in
$d^{j}\kappa/dt^{j}$ $(j=1,\ldots,n-3)$.
Furthermore, since
\begin{equation}\label{eq:diff3}
\begin{split}
\dfrac{d^{n-2}\kappa}{dt^{n-2}}
=&
\sum_{j+k\leq n}
c_{jk}
\left(\dfrac{1}{|\gamma'_{\theta_i}|^3}\right)^{(n-2-j-k)}
 \det(\gamma^{(j)},\gamma^{(k)},\xi_{\theta_i})\\
&\hspace{10mm}+
\dfrac{1}{|\gamma'_{\theta_i}|^3}\bigg(
\det(\gamma',\gamma^{(n)},\xi_{\theta_i})
+
(n-3)
\det(\gamma'',\gamma^{(n-1)},\xi_{\theta_i})\\
&\hspace{30mm}+
m\det(\gamma''',\gamma^{(n-2)},\xi_{\theta_i})
\bigg)
\end{split}
\end{equation}
$(n\geq5)$ holds, where 
$m=(n-5)(n-2)/2$ and
$c_{jk}$ are natural numbers,
this implies that
$a_{n-1},a_n$ do not appear in \eqref{eq:comp}.
Moreover, 
since the coefficient of 
$d^{n-2}\kappa/dt^{n-2}$ in \eqref{eq:comp}
is
$(dt/ds)^{n-2}$,
setting 
$$
A_i=\dfrac{1}{|\gamma'_{\theta_i}(0)|^{3}}
\left(\dfrac{dt}{ds}(0)\right)^{n-2},
$$
the equations \eqref{eq:comp} at $s=0$ for $i=1,2$ are
\begin{equation}\label{eq:projeq}
\begin{split}
\dfrac{d^{n-2}\kappa_{\theta_1}}{ds_1^{n-2}}(0)
=
-A_1(b_n\sin\theta_1+mb_3a_{n-2}\cos\theta_1)+B_1,\\
\dfrac{d^{n-2}\kappa_{\theta_2}}{ds_2^{n-2}}(0)
=
-A_2(b_n\sin\theta_2+mb_3a_{n-2}\cos\theta_2)+B_2,
\end{split}
\end{equation}
where $B_i$ $(i=1,2)$ are terms consisting of
$a_1,\ldots,a_{n-3}$ and $b_1,\ldots,b_{n-1}$.
The equation \eqref{eq:projeq} can be solved when
$A_1A_2b_3\sin(\theta_1-\theta_2)\ne0$.
Since 
$dt/ds(0)=|\gamma_{\theta_i}'(0)|^{-1}$,
we have the assertion.
\end{proof}
Since obtaining $a_2,a_3,b_3$ is equivalent to
obtaining the curvature, the first derivative of
the curvature and the torsion,
results of this type for the perspective
projection can be found in \cite{c,lz} and \cite[Theorem 8]{multiview}.
Since we can detect the coefficients of the Taylor expansion 
of $\gamma$, using our result,
one can easily construct the desired curve
whose projections have the prescribed 
curvatures.

\subsection{Projections in the tangential direction and another direction}\label{tan}
In this section, we consider the case that $\xi_1$ is tangent to
$\gamma$ at $0$.
In this case, $\gamma_{\xi_1}=\pi_{\xi_1}\circ\gamma$ has a singular point at $0$.
To consider differential geometric invariants of the singular curve,
we describe the cuspidal curvature of singular plane curves introduced
in \cite{suyo} (see also \cite{ss}).
Let $c:I\to\R^2$ be a plane curve, and $c'(0)=0$.
The curve $c$ is said to be {\it $A$-type\/} at $0$
if $c''(0)\ne0$.
Let $c$ be an $A$-type curve at $0$. 
Then
$$
\mu
=
\dfrac{\det(c''(0),c'''(0))}{|c''(0)|^{5/2}}.
$$
does not depend on the choice of parameter,
and is called the {\it cuspidal curvature}.

Let $\gamma:I\to\R^3$ be a
$C^\infty$ curve with non-zero curvature at $0$.
We assume that 
$\gamma_{\xi_1}$ has a singular point at $0$.
Since the curvature of $\gamma$ does not vanish,
by the Frenet formula,
$\gamma_{\xi_1}(0)$
is an $A$-type curve at $0$.
We also assume that there exists an integer $N$ such that
$\det((\gamma_{\xi_1})'',
(\gamma_{\xi_1})^{(i)})(0)=0$ ($i=3, 5,\ldots,2N-1$)
and
$\det((\gamma_{\xi_1})'',
(\gamma_{\xi_1})^{(2N+1)})(0)\ne0$.
We give the positively oriented $xyz$-coordinate 
system for $\R^3$, and rotating this coordinate system,
we give a $yz$-coordinate system for 
$\xi_1^\perp$ as follows.
We set the $y$-axis 
as the direction
of $(\gamma_{\xi_1})''(0)$,
and set the $x$-axis as the direction of $\xi_1$.
We give an orientation of $\gamma_{\xi_1}$ so that
$\det((\gamma_{\xi_1})'',
(\gamma_{\xi_1})^{(2N+1)})(0)>0$,
and also that of $\gamma$ agrees with that of 
$\gamma_{\xi_1}$ (see Figure \ref{fig:oricurve2}).
Then we may assume that $\gamma$ is given by \eqref{eq:gamma}
with $a_2>0,b_3\geq0$.
Then $\mu=b_3/a_2^{3/2}$.
On the other hand,
we consider a unit vector 
$\xi_2=(\sin\theta_1 \cos\theta_2, \sin\theta_1 \sin\theta_2, \cos\theta_1)$ 
$(\cos\theta_1\ne0)$
which is not tangent to $\gamma$ at $0$. 
Since we take the above $xyz$-coordinate, $\theta_1,\theta_2$ are
known.

\begin{figure}[!htb]
\centering
\includegraphics[scale=1, clip]{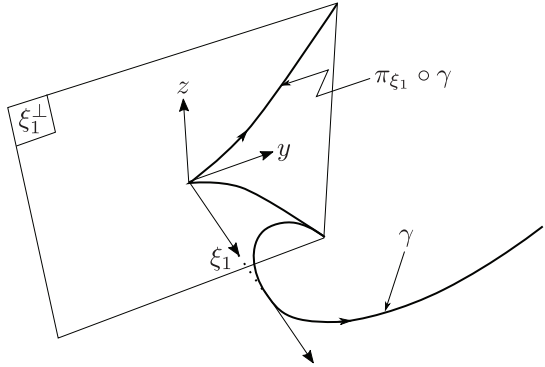}
\caption{Orientations of $\xi_1^\perp$ and $\pi_{\xi_1}\circ \gamma$.}
\label{fig:oricurve2}
\end{figure}

\begin{theorem}\label{prop:212}
Suppose
that\/
$\cos\theta_1\ne0$, 
then the following hold.
\begin{enumerate}
\item 
The coefficients\/ $a_2$ and\/ $b_3$ are uniquely determined
by the cuspidal curvature\/ $\mu$ and 
the\/ $0$-th order information of\/ $\kappa_{\xi_2}$ at\/ $0$.
\item In addition to\/ {\rm (1)},
$a_3$ is uniquely determined
by the cuspidal curvature\/ $\mu$ and 
the first order information of\/ $\kappa_{\xi_2}$ at\/ $0$.
\end{enumerate}
\end{theorem}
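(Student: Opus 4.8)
The plan is to reuse the curvature computation from the proof of Theorem \ref{prop:211}, the essential difference being that now $\xi_2$ is transverse to $\gamma$, so the numerator of the curvature formula does \emph{not} vanish at $0$. Consequently the $0$-th order information of $\kappa_{\xi_2}$ already carries $a_2$, and its first order information carries $a_3$; the cuspidal curvature $\mu$, which is read off the tangential projection $\gamma_{\xi_1}$ and is therefore known, supplies the remaining relation to $b_3$. First I would record that, exactly as before, the signed curvature of the regular curve $\gamma_{\xi_2}=\pi_{\xi_2}\circ\gamma$ in the oriented plane $\xi_2^\perp$ is $\kappa_{\xi_2}(t)=\det(\gamma'(t),\gamma''(t),\xi_2)/|\gamma_{\xi_2}'(t)|^3$, using $\gamma_{\xi_2}'=\gamma'-\inner{\gamma'}{\xi_2}\xi_2$ and that subtracting multiples of the third column $\xi_2$ leaves the determinant unchanged. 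Evaluating at $t=0$ with the normalization in \eqref{eq:gamma} (so $\gamma'(0)=(1,0,0)$, $\gamma''(0)=(0,a_2,0)$) gives $\det(\gamma'(0),\gamma''(0),\xi_2)=a_2\cos\theta_1$ and $|\gamma_{\xi_2}'(0)|^2=1-\sin^2\theta_1\cos^2\theta_2$, hence
\[
\kappa_{\xi_2}(0)=\frac{a_2\cos\theta_1}{(1-\sin^2\theta_1\cos^2\theta_2)^{3/2}}.
\]
This is a linear relation in $a_2$ whose coefficient is nonzero precisely because $\cos\theta_1\ne0$, so $a_2$ is uniquely recovered (its sign being fixed by the normalization $a_2>0$). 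Since $\mu=b_3/a_2^{3/2}$, I then obtain $b_3=\mu\,a_2^{3/2}$, which proves (1).

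For (2) I would differentiate. Setting $\alpha(t)=\det(\gamma',\gamma'',\xi_2)$ and $\beta(t)=|\gamma_{\xi_2}'(t)|^2$ and passing to the arc-length parameter $s_2$ via $dt/ds_2=\beta^{-1/2}$, one computes
\[
\frac{d\kappa_{\xi_2}}{ds_2}(0)=\frac{2\alpha'(0)\beta(0)-3\alpha(0)\beta'(0)}{2\beta(0)^3}.
\]
Here $\gamma'''(0)=(0,a_3,b_3)$ gives $\alpha'(0)=\det(\gamma'(0),\gamma'''(0),\xi_2)=a_3\cos\theta_1-b_3\sin\theta_1\sin\theta_2$, while a direct computation yields $\beta'(0)=-2a_2\sin^2\theta_1\sin\theta_2\cos\theta_2$; the values $\alpha(0)=a_2\cos\theta_1$ and $\beta(0)=1-\sin^2\theta_1\cos^2\theta_2$ are as above. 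Substituting the already-known $a_2,b_3$ and the known angles $\theta_1,\theta_2$, the right-hand side is an affine function of the single unknown $a_3$ whose coefficient equals $\cos\theta_1/(1-\sin^2\theta_1\cos^2\theta_2)^2$, again nonzero by $\cos\theta_1\ne0$. Solving this equation determines $a_3$ uniquely, proving (2).

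The conceptual crux in both parts is the same and is mild: the coefficient of the sought quantity in each (affine) equation is a nonzero multiple of $\cos\theta_1$, so the hypothesis $\cos\theta_1\ne0$, i.e.\ the transversality of $\xi_2$ to $\gamma$ at $0$, is exactly what makes the inversion possible. The only genuinely laborious step I expect is the derivative bookkeeping for $d\kappa_{\xi_2}/ds_2(0)$ in the second part — keeping track of the $\alpha',\beta'$ terms and confirming that $a_3$ appears linearly with the stated nonvanishing coefficient — but this is routine differentiation rather than a structural obstacle.
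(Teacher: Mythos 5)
Your proposal is correct and follows essentially the same route as the paper: both compute $\coef(\kappa_{\xi_2},s,1)$, observe that the $0$-th (resp.\ first) order term is affine in $a_2$ (resp.\ $a_3$) with coefficient a nonzero multiple of $\cos\theta_1$ times a power of $\cos^2\theta_1\cos^2\theta_2+\sin^2\theta_2=1-\sin^2\theta_1\cos^2\theta_2$, and then use $\mu=b_3/a_2^{3/2}$ to recover $b_3$. Your explicit expression for $d\kappa_{\xi_2}/ds_2(0)$ agrees with the paper's $Q(\theta)/(\cos^2\theta_1\cos^2\theta_2+\sin^2\theta_2)^3$ after simplification, so no gap remains.
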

\begin{proof}
We remark that 
$\cos\theta_1\ne0$ implies
$\cos^2\theta_1 \cos^2\theta_2+\sin^2\theta_2\ne0$.
The curvature $\kappa_{\xi_2}$ of the plane curve
$\gamma_{\xi_2}$ satisfies
\begin{align}\label{eq:tangene}
\coef(\kappa_{\xi_2},s,1)
=&
\Bigg(
\dfrac{a_{2} \cos\theta_1}
{(\cos^2\theta_1 \cos^2\theta_2+\sin^2\theta_2)^{3/2}}
,\;\;
\dfrac{Q(\theta)}
{(\cos^2\theta_1 \cos^2\theta_2+\sin^2\theta_2)^3}\Bigg),
\end{align}
where
\begin{align*}
&Q(\theta)=-b_{3} \cos^2\theta_1 \cos^2\theta_2 \sin\theta_1 \sin\theta_2
-b_{3} \sin\theta_1 \sin^3\theta_2\\
&\hspace{5mm}
+\cos^3\theta_1 \cos\theta_2 
(a_{3} \cos\theta_2-3 a_{2}^2 \sin\theta_2)
+\cos\theta_1 \sin\theta_2 (3 a_{2}^2 \cos\theta_2+a_{3} \sin\theta_2).
\end{align*}
Since we know $\mu=b_3/a_2^{3/2}$ and $\theta_1,\theta_2$,
we obtain $a_2$ and $b_3$ from the first component of \eqref{eq:tangene}
if $\cos\theta_1\ne0$.
Furthermore, we also obtain 
$a_3$ from the second component of \eqref{eq:tangene}, under
the assumption 
$\cos\theta_1(\cos^2\theta_1 \cos^2\theta_2+\sin^2\theta_2)\ne0$.
\end{proof}

See \cite[Section 3]{koganolver} for relationships 
between invariants of space curves and
projected plane curves.

\section{Projections of surfaces}\label{sec:surf}
In this section, we consider
the local information of surfaces
and contours.
Let $U\subset \R^2$ be a neighborhood of the origin $0=(0,0)$.
Let $f:U\to\R^3$ ($f(0)=(0,0,0)$) be a $C^\infty$ surface
with non-vanishing Gaussian curvature at $0$.
We assume that $0$ is not an umbilical point.
Without loss of generality,
we may assume that $f$ is written in the form
\eqref{eq:surfsiki}
with $a_{20}a_{02}\ne0$, $a_{20}>a_{02}$, $a_{20}>0$.
We set the unit normal vector $\nu$ of $f$ so that it satisfies
$\nu(0,0)=(0,0,1)$.

\subsection{Obtaining information about surfaces from contours}\label{sec:surfnp}
Let $\xi$ be a unit vector which is tangent to $f$ at $0$.
Then we may assume $\xi=\xi(\theta_1)=(\cos\theta_1,\sin\theta_1,0)$,
where $0<\theta_1<\pi$.
The set $S$ of singular points of the map 
$\pi_{\xi(\theta_1)}\circ f$
is 
\begin{equation}\label{eq:projsingcond}
S=\{(u,v)\,|\,\cos\theta_1h_u+\sin\theta_1h_v=0\},
\end{equation}
where $h$ is given in \eqref{eq:surfsiki}.
We assume
$$
p(\theta_1)
=
a_{20} \cos^2\theta_1 + a_{02} \sin^2\theta_1\ne0,
$$
which implies that
the direction 
$\xi(\theta_1)$
is not an asymptotic direction of $f$ at the origin.
By the assumption $a_{02}\ne0$ and $0<\theta_1<\pi$,
it holds that
$$(\cos\theta_1h_u+\sin\theta_1h_v)_v(0,0)=a_{02}\sin\theta_1\ne0,$$
and this implies that there exists
a regular parametrization of $S$.
For the purpose of taking this parametrization,
we set an orientation of $S$ as follows.
First, we give an orientation of the normal plane $\xi(\theta_1)^\perp$ 
of $\xi(\theta_1)$ such that
$$
X=(-\sin\theta_1,\cos\theta_1,0),\quad
Y=(0,0,1)
$$
is a positive basis.
Next, we put an orientation on 
$(\pi_{\xi(\theta_1)}\circ f)(S)$
so that it agrees with the direction of $X$,
and also put that of $S$ 
agreeing with $(\pi_{\xi(\theta_1)}\circ f)(S)$ (see Figure \ref{fig:ori}).
\begin{figure}[!htb]
\centering
\includegraphics[scale=0.8, clip]{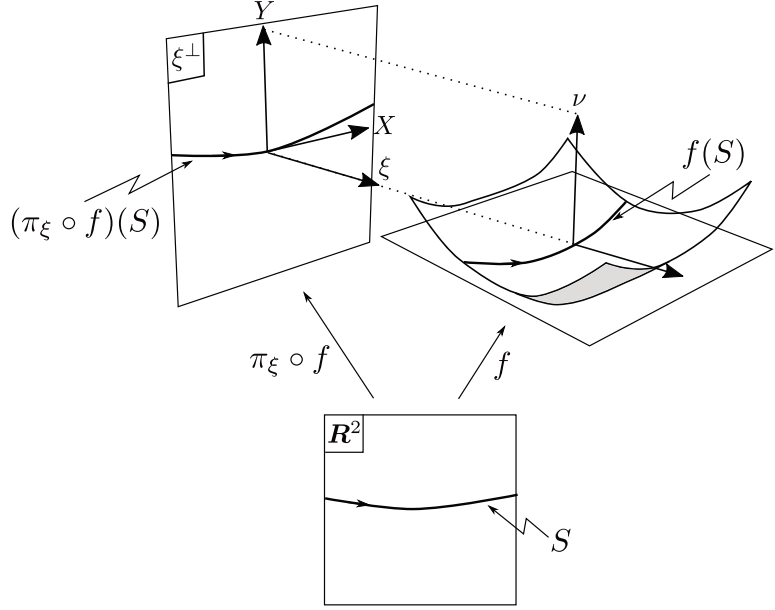}
\caption{Orientations of $\xi^{\perp}$ and the contour.\label{fig:ori}}
\end{figure}
Let $k_{\theta}$ be the curvature of 
the contour from a direction $\theta$.
\begin{lemma}
The first order 
information\/ $k_{\theta}$
with respect to the 
arc-length parameter\/ $s$ $($suitably oriented\/$)$ 
are
\begin{equation}\label{eq:surfcurv}
\coef(k_{\theta_1},1,s)
=
\Bigg(
\dfrac{a_{20}a_{02}}{p(\theta_1)},
\dfrac{q(\theta_1)}{p(\theta_1)^3}
\Bigg),
\end{equation}
where 
\begin{align}
q(\theta_1)
&=
a_{03} a_{20}^3 \cos^3\theta_1 
- 3 a_{02} a_{12} a_{20}^2 \cos^2\theta_1 \sin\theta_1 \nonumber\\
&\hspace{10mm}
+ 3 a_{02}^2 a_{20} a_{21} \cos\theta_1 \sin^2\theta_1 
- a_{02}^3 a_{30} \sin^3\theta_1.\label{eq:surfcurvq}
\end{align}
\end{lemma}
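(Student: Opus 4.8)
The plan is to parametrize the contour generator $S$ as a graph over the $u$-axis, push it forward to the plane $\xi(\theta_1)^\perp$ under $\pi_{\xi(\theta_1)}\circ f$, and then feed the resulting plane curve into the standard signed-curvature formula, converting the parameter derivatives to arc-length derivatives only at the very end.

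First I would parametrize $S$. Writing $G(u,v)=\cos\theta_1\,h_u+\sin\theta_1\,h_v$ for the defining function in \eqref{eq:projsingcond}, the hypotheses give $G(0,0)=0$ and, because the quadratic part of $h$ in \eqref{eq:surfsiki} is diagonal (so $h_{uv}(0,0)=0$), also $G_v(0,0)=a_{02}\sin\theta_1\neq0$. Hence the implicit function theorem produces a smooth $v=v(u)$ with $v(0)=0$ and $G(u,v(u))\equiv0$, and $u\mapsto(u,v(u))$ is a regular parametrization of $S$. In the positive basis $\{X,Y\}$, $X=(-\sin\theta_1,\cos\theta_1,0)$, $Y=(0,0,1)$, the contour is then the plane curve
$$c(u)=\big(\inner{f(u,v(u))}{X},\ \inner{f(u,v(u))}{Y}\big)=\big(-u\sin\theta_1+v(u)\cos\theta_1,\ h(u,v(u))\big).$$

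Next I would compute only the low-order jet of $v$. Differentiating $G(u,v(u))\equiv0$ once and twice at $0$ gives $v'(0)=-G_u(0,0)/G_v(0,0)=-a_{20}\cos\theta_1/(a_{02}\sin\theta_1)$ and a rational expression for $v''(0)$ in the $a_{ij}$ with $i+j\le 3$ and in $\theta_1$. The crucial point is that $v'''(0)$ is never needed: in the two components of $c$ the third derivative of $v$ enters only through $x'''=v'''\cos\theta_1$ and through the term $h_v\,v'''$ in $y'''$, and at $u=0$ these drop out because $y'(0)=h_u(0,0)+h_v(0,0)v'(0)=0$ (it multiplies $x'''$ in the curvature derivative) and $h_v(0,0)=0$. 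This is precisely why the fourth-order coefficients $a_{40},\dots$ cancel and $q(\theta_1)$ in \eqref{eq:surfcurvq} depends only on the third-order data.

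Finally I would substitute $c',c'',c'''$ into the signed curvature $k=(x'y''-x''y')/(x'^2+y'^2)^{3/2}$ and its derivative, passing to arc length via $dk/ds=k'(u)/|c'(u)|$ and evaluating at $u=0$. Since $y'(0)=0$, the value collapses to $k_{\theta_1}(0)=x'(0)y''(0)/|x'(0)|^{3}$, and with $x'(0)=-p(\theta_1)/(a_{02}\sin\theta_1)$ and $y''(0)=a_{20}\,p(\theta_1)/(a_{02}\sin^2\theta_1)$ this assembles to $a_{20}a_{02}/p(\theta_1)$; I regard this as a built-in consistency check, since it recovers Koenderink's relation (Gaussian curvature $a_{20}a_{02}$ divided by the normal curvature $p(\theta_1)$ in the direction $\xi(\theta_1)$). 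I expect the main obstacle to be the lengthy but mechanical bookkeeping in the first-order coefficient together with the sign tracking: one must check that the orientation of $S$ fixed in Figure~\ref{fig:ori} (which may reverse the natural $u$-parametrization, flipping the sign of the signed curvature) produces exactly the stated sign, and that after clearing the powers of $a_{02}\sin\theta_1$ the cubic numerator reorganizes into the homogeneous form $q(\theta_1)$ while the denominator collapses to $p(\theta_1)^3$, yielding \eqref{eq:surfcurv}.
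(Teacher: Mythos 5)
Your proposal is correct and follows essentially the same route as the paper: both parametrize the contour generator $S$ as a graph $v=v(u)$ via the implicit function theorem, compute only the $2$-jet of $v$ (your observation that $v'''(0)$ drops out because $y'(0)=h_v(0,0)=0$ is exactly why the paper only needs $\coef(c(t),2,t)$), and then evaluate the signed plane curvature and its arc-length derivative, with the paper merely packaging the planar determinant as $\det(\hat C',\hat C^{(i)},\xi(\theta_1))$ instead of writing coordinates in the basis $\{X,Y\}$. The only point needing care is the one you already flag: the raw value $x'(0)y''(0)/|x'(0)|^{3}$ equals $-a_{20}|a_{02}|/|p(\theta_1)|$, and it is precisely the orientation convention of Figure \ref{fig:ori} that converts this to $a_{20}a_{02}/p(\theta_1)$ in all sign cases.
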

\begin{proof}
Since $a_{02}\sin\theta_1\ne0$,
we can take a parametrization 
$C(t)=(t,c(t))$ of $S$.
Then since $\pi_{\xi(\theta_1)}\circ f \circ C$ lies 
on the plane $\xi(\theta_1)^\perp$,
\begin{align*}
k_{\theta_1}(0)
=&\dfrac{\det\big(
\hat C',
\hat C'',\xi(\theta_1)\big)}
{|\hat C'|^3}(0),\\
\dfrac{dk_{\theta_1}}{ds}(0)
=&\dfrac{\det\big(
\hat C',
\hat C''',\xi(\theta_1)\big)}
{|\hat C'|^4}(0)
+
3\inner{\hat C'}{\hat C''}(0)
\dfrac{\det\big(
\hat C',
\hat C'',\xi(\theta_1)\big)}
{|\hat C'|^6}(0)
\end{align*}
where $\hat C=f\circ C=f(t,c(t))$ and ${}'=d/dt$.
Since $h'(t,c(t))=0$ at $t=0$, it holds that
$$
\det\big(
\hat C',
\hat C^{(i)},\xi(\theta_1)\big)=
(c'\cos\theta_1+\sin\theta_1)(h\circ C)^{(i)}\quad
(i=2,3),
$$
where $\hat C^{(2)}=\hat C''$ and $\hat C^{(3)}=\hat C'''$.
By \eqref{eq:projsingcond},
it holds that
\begin{align*}
\coef(c(t),2,t)
=&
\Bigg(
0,\;\;
-\dfrac{a_{20} \cos\theta_1}{a_{02}\sin\theta_1}
,\;\;
\dfrac{1}{a_{02}^3\sin^3\theta_1 }
\tilde{q}(\theta_1)\Bigg),
\end{align*}
where
\begin{align*}
\tilde{q}(\theta_1)=&
-a_{12} a_{20}^2 \cos^3\theta_1 
- a_{03} a_{20}^2 \cos^2\theta_1 \sin\theta_1
+ 2 a_{02} a_{20} a_{21} \cos^2\theta_1 \sin\theta_1 \\
+& 2 a_{02} a_{12} a_{20} \cos\theta_1 \sin^2\theta_1 
- a_{02}^2 a_{30} \cos\theta_1 \sin^2\theta_1 
- a_{02}^2 a_{21} \sin^3\theta_1.
\end{align*}
Summarizing up the above calculation, we have the assertion.
\end{proof}
The inner product of
$(\pi_{\xi(\theta_1)}\circ f\circ C)'(0)$ 
and $X$ is
\begin{equation}\label{eq:pixifc}
\inner{(\pi_{\xi(\theta_1)}\circ f\circ C)'(0)}{X}
=
\dfrac{-1}{a_{02}\sin\theta_1}p(\theta_1).
\end{equation}
Let $s$ be the arc-length parameter of $S$
where the orientation is given in the above manner.
Thus we remark that by \eqref{eq:pixifc}, if 
$a_{02}\sin\theta_1p(\theta_1)$ is negative,
$s$ is the opposite direction to the above parameter $t$.
\begin{remark}
If $a_{20} a_{02} \ne0$ and $p(\theta_1) \ne 0$, 
then $q(\theta_1)=0$ if and only if the contour has a vertex at 
$(\pi_{\xi(\theta_1)}\circ f\circ C)(0)$, 
and $\xi(\theta_1)=(\cos\theta_1,\sin\theta_1,0)$ 
is called the cylindrical direction of $f$ at 
the origin (see \cite{FHN2017} for details). 
\end{remark}

Now we consider obtaining the second order information
of the surface by the contours of the projections of
three distinct directions.
Let $f$ be the surface given by \eqref{eq:surfsiki}.
Since the mean and the Gaussian curvatures 
of $f$ are $(a_{20}+a_{02})/2$ and
$a_{20}a_{02}$ respectively,
if the mean and the Gaussian curvatures are obtained,
then one can conclude that the second order information
are obtained. Thus we consider obtaining the
mean and the Gaussian curvatures.
We set
$M$ and $G$ to be the mean and the Gaussian curvatures at $0$:
\begin{equation}\label{eq:defmg}
M=\frac{a_{20}+a_{02}}{2},\quad 
G=a_{20}a_{02}.
\end{equation}
We also take another direction $\theta_2(\ne\theta_1, 0<\theta_2<\pi)$ 
which satisfies
$p(\theta_2)\ne0$.
By \eqref{eq:surfcurv} we get
\begin{equation}\label{eq:cos2thetai}
\cos 2\theta_i
=
\frac{-2a_{20}a_{02}+(a_{20}+a_{02})k_{\theta_i}}
{(a_{02}-a_{20})k_{\theta_i}}\quad(i=1,2).
\end{equation}
Substituting these formulae into the trigonometric identity
$$
\cos^2 2(\theta_i-\theta_j)+\cos^2 2\theta_i+\cos^2 2\theta_j
-2\cos 2(\theta_i-\theta_j) \cos 2\theta_i \cos 2\theta_j
-1=0,
$$
$(i,j=1,2)$ we get an equation $P_{ij}(M,G)=0$ where
\begin{eqnarray}\label{angle}
P_{ij}(M,G)=&&
\left(M_{ij}^2-G_{ij}\cos^2 (\theta_i-\theta_j)\right)G^2
-2G_{ij}M_{ij}\sin^2(\theta_i-\theta_j)GM \nonumber \\
&&+G_{ij}^2\sin^4(\theta_i-\theta_j)M^2
+G_{ij}^2\cos^2 (\theta_i-\theta_j) \sin^2 (\theta_i-\theta_j)G \nonumber \\
=&&
(M,G)\;
Q_{ij}
\;
\pmt{G\\M}
+G_{ij}^2\cos^2 (\theta_i-\theta_j) \sin^2 (\theta_i-\theta_j)G
\end{eqnarray}
and
\begin{align}
M_{ij}=&\frac{k_{\theta_{i}}+k_{\theta_{j}}}{2},\ 
G_{ij}=k_{\theta_{i}}k_{\theta_{j}},\label{eq:defmgij}\\
Q_{ij}=&\pmt{ 
M_{ij}^2-G_{ij}\cos^2 (\theta_i-\theta_j) 
&-G_{ij}M_{ij}\sin^2(\theta_i-\theta_j)\\
-G_{ij}M_{ij}\sin^2(\theta_i-\theta_j)
&G_{ij}^2\sin^4(\theta_i-\theta_j)
}.\nonumber
\end{align}
Since $P_{ij}(M,G)=0$ is a quadratic curve,
generally the values of $G$ and $M$ should be determined by
the curvatures of the apparent contours from three distinct directions.

\begin{theorem}\label{thm:threegandm}
Take three distinct directions\/ 
$0<\theta_1, \theta_2, \theta_3<\pi$ 
satisfying\/ $p(\theta_i)\not=0$ and\/ 
$\sin 2\theta_1+\sin 2\theta_2+\sin 2\theta_3\ne0$.
Then\/ $G$ and\/ $M$ are given as follows:
\begin{equation}\label{eq:gandm}
G=
\dfrac{\det L}{\det V},\quad
M=
\dfrac{\det P}{\det V},
\end{equation}
where
\begin{align}
x_1=&\trans{
\left(
\dfrac{M_{12}}{G_{12}},
\dfrac{M_{23}}{G_{23}},
\dfrac{M_{31}}{G_{31}}
\right)},\nonumber\\
x_2=&\trans{
\left(
\dfrac{M_{12}^2-G_{12}\cos^2 (\theta_1-\theta_2)}
{G_{12}^2\sin^2 (\theta_1-\theta_2)},
\dfrac{M_{23}^2-G_{23}\cos^2 (\theta_2-\theta_3)}
{G_{23}^2\sin^2 (\theta_2-\theta_3)},\right.}\nonumber\\
&\hspace{50mm}
\left.\dfrac{M_{31}^2-G_{31}\cos^2 (\theta_3-\theta_1)}
{G_{31}^2\sin^2 (\theta_3-\theta_1)}
\right),\nonumber\\
x_3=&\trans{
\big(
\cos^2 (\theta_1-\theta_2),
\cos^2 (\theta_2-\theta_3),
\cos^2 (\theta_3-\theta_1)\big)},\label{eq:x3}\\
x_4=&\trans{
\big(
\sin^2 (\theta_1-\theta_2),
\sin^2 (\theta_2-\theta_3),
\sin^2 (\theta_3-\theta_1)\big)}\label{eq:x4}
\end{align}
and
$$
L=\big(x_1,x_3,x_4\big),\quad
P=\big(x_2,x_3,x_4\big),\quad
V=\big(x_1,x_2,x_4\big).
$$
\end{theorem}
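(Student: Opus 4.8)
The plan is to convert the three relations $P_{ij}(M,G)=0$ from \eqref{angle}, taken for the pairs $(i,j)=(1,2),(2,3),(3,1)$, into a single $3\times3$ linear system whose unknowns are $G$, $M$, and the auxiliary quantity $w=M^2/G$, and then to invert it by Cramer's rule. For each pair I would divide $P_{ij}(M,G)=0$ by $G\,G_{ij}^2\sin^2(\theta_i-\theta_j)$; this is legitimate because $G=a_{20}a_{02}\neq0$, the directions are distinct, and $p(\theta_i)\neq0$ forces $k_{\theta_i}=a_{20}a_{02}/p(\theta_i)\neq0$ by the Lemma \eqref{eq:surfcurv}. A direct rearrangement then shows the equation reads
\[
(x_2)_{ij}\,G-2(x_1)_{ij}\,M+(x_4)_{ij}\,w+(x_3)_{ij}=0,
\]
where the coefficients $(x_1)_{ij}=M_{ij}/G_{ij}$, $(x_2)_{ij}=(M_{ij}^2-G_{ij}\cos^2(\theta_i-\theta_j))/(G_{ij}^2\sin^2(\theta_i-\theta_j))$, $(x_3)_{ij}=\cos^2(\theta_i-\theta_j)$, $(x_4)_{ij}=\sin^2(\theta_i-\theta_j)$ are exactly the entries of the vectors defined in the statement (see \eqref{eq:x3} and \eqref{eq:x4}). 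Stacking the three pairs gives the vector equation $G\,x_2-2M\,x_1+w\,x_4=-x_3$.

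The purpose of introducing $w$ is that the genuine triple $(G,M,M^2/G)$ is an honest solution of this linear system, so once the coefficient matrix $V=(x_1,x_2,x_4)$ is nonsingular the system has that triple as its unique solution; the compatibility relation $w=M^2/G$ is then automatic and needs no separate check. Applying Cramer's rule to the three columns and performing the obvious column permutations yields $G$ and $M$ as the determinant quotients recorded in \eqref{eq:gandm}, with $L=(x_1,x_3,x_4)$ and $P=(x_2,x_3,x_4)$.

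Everything therefore reduces to the single quantitative claim $\det V\neq0$, and this is where the real work lies. Here I would substitute $k_{\theta_i}=G/p(\theta_i)$ into $M_{ij}$ and $G_{ij}$; writing $p(\theta_i)=M+D\cos2\theta_i$ with $D=(a_{20}-a_{02})/2$ and using $M^2-D^2=G$, the curvature-dependence collapses and the columns $x_1,x_2,x_4$ become rational expressions in $M$, $D$, $G$ and the angles alone. After pulling out the common scalars, one column operation together with a Vandermonde-type reduction should make $\det V$ factor completely; I expect
\[
\det V=-\frac{D}{G^2}\,(\sin2\theta_1+\sin2\theta_2+\sin2\theta_3)\,
\sin(\theta_1-\theta_2)\sin(\theta_2-\theta_3)\sin(\theta_3-\theta_1).
\]

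Since $D\neq0$ (as $a_{20}>a_{02}$) and $G\neq0$, and since the directions are distinct so that each $\sin(\theta_i-\theta_j)\neq0$, this factorization shows that $\det V\neq0$ precisely when $\sin2\theta_1+\sin2\theta_2+\sin2\theta_3\neq0$, which is exactly the hypothesis. The main obstacle is the honest derivation of this factorization: the entries of $x_2$ are quadratic in the $p(\theta_i)$, and the delicate step is the simplification of $(p(\theta_i)+p(\theta_j))^2-4\cos^2(\theta_i-\theta_j)\,p(\theta_i)p(\theta_j)$, which one first rewrites as $(p(\theta_i)-p(\theta_j))^2+4\sin^2(\theta_i-\theta_j)\,p(\theta_i)p(\theta_j)$ and then, via $p(\theta_i)-p(\theta_j)=-2D\sin(\theta_i+\theta_j)\sin(\theta_i-\theta_j)$, as $4D^2\sin^2(\theta_i+\theta_j)+4p(\theta_i)p(\theta_j)$. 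It is this reduction that exposes the Vandermonde structure in $\cos2\theta_k$ and produces the clean product above. Once $\det V\neq0$ is secured, the uniqueness of the linear system's solution makes the formulas \eqref{eq:gandm} valid and completes the proof.
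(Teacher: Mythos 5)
Your proposal follows essentially the same route as the paper: both convert the three relations $P_{ij}(M,G)=0$ into a $3\times3$ linear system (the paper in the unknowns $(G^2,GM,M^2)$ with right-hand side proportional to $G$, you in the equivalent row-normalized unknowns $(G,M,M^2/G)$), solve it by Cramer's rule, and reduce the hypothesis to the very same factorization of $\det V$, which your predicted formula $-\tfrac{D}{G^2}(\sin2\theta_1+\sin2\theta_2+\sin2\theta_3)\prod\sin(\theta_i-\theta_j)$ reproduces exactly since $\tfrac{a_{02}-a_{20}}{2a_{20}^2a_{02}^2}=-\tfrac{D}{G^2}$. The only caveat is constant-factor bookkeeping --- as you set the system up, Cramer's rule literally yields $G=-\det L/\det V$ and $M=-\det P/(2\det V)$ rather than the stated quotients --- but the paper's own passage from \eqref{eq:gmw} to \eqref{eq:gandm} involves the same kind of sign and factor-of-two adjustments, so this reflects no difference of method.
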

Here $\trans{(~)}$ stands for the transpose of a matrix.
\begin{proof}
A triplet of the equations 
$P_{12}(M,G)=P_{23}(M,G)=P_{31}(M,G)=0$ 
is a system of equations
$$
W \pmt{G^2 \\ GM \\ M^2}=G b,
$$
where $W=(w_1,w_2,w_3)$
with
\begin{align}
w_1&=\pmt{
M_{12}^2-G_{12}\cos^2 (\theta_1-\theta_2) \\ 
M_{23}^2-G_{23}\cos^2 (\theta_2-\theta_3) \\ 
M_{31}^2-G_{31}\cos^2 (\theta_3-\theta_1)},
\label{eq:w1}\\
w_2&=-\pmt{
2G_{12}M_{12}\sin^2(\theta_1-\theta_2) \\ 
2G_{23}M_{23}\sin^2(\theta_2-\theta_3) \\ 
2G_{31}M_{31}\sin^2(\theta_3-\theta_1)},\ 
w_3=\pmt{
G_{12}^2\sin^4(\theta_1-\theta_2) \\ 
G_{23}^2\sin^4(\theta_2-\theta_3) \\ 
G_{31}^2\sin^4(\theta_3-\theta_1)},
\label{eq:w23}
\end{align}
and
\begin{equation}\label{eq:defb}
b=\pmt{
G_{12}^2\cos^2 (\theta_1-\theta_2) \sin^2 (\theta_1-\theta_2) \\ 
G_{23}^2\cos^2 (\theta_2-\theta_3) \sin^2 (\theta_2-\theta_3) \\ 
G_{31}^2\cos^2 (\theta_3-\theta_1) \sin^2 (\theta_3-\theta_1)}.
\end{equation}
Since 
$$
\det W
=
-2G_{12}^2G_{23}^2G_{31}^2
\sin^2 (\theta_1-\theta_2)
\sin^2 (\theta_2-\theta_3)
\sin^2 (\theta_3-\theta_1)
\det V,
$$
if $k_{\theta_1}k_{\theta_2}k_{\theta_3}\ne0$,
by Cramer's rule, 
we get
\begin{equation}\label{eq:gmw}
G
=
\dfrac{\det W_1}{\det W},\ 
M
=
\dfrac{\det W_2}{\det W},\ 
\end{equation}
where
$W_1=(b,w_2,w_3)$,
$W_2=(w_1,b,w_3)$.
By a direct calculation,
we have \eqref{eq:gandm}.
Furthermore, since by
\eqref{eq:cos2thetai}, it holds that
$k_i=2a_{20}a_{02}/\big(a_{20}+a_{02}+\cos2\theta_i(a_{20}-a_{02})\big)$,
we have
\begin{align*}
\det V=&
\dfrac{(a_{02}-a_{20})}{2a_{20}^2a_{02}^2}
\Big(
\sin(\theta_1-\theta_2)
\sin(\theta_2-\theta_3)
\sin(\theta_3-\theta_1)\\
&\hspace{40mm}
\big(\sin 2\theta_1+\sin 2\theta_2+\sin 2\theta_3\big)\Big).
\end{align*}
Thus we have the condition $\sin 2\theta_1+\sin 2\theta_2+\sin 2\theta_3
\ne0$.
\end{proof}

Since in our setting,
$\theta_1-\theta_2$, $\theta_1-\theta_3$,
$k_{\theta_1}$, $k_{\theta_2}$, $k_{\theta_3}$
are known, 
all matrix elements of $W$ and $b$ are known.
Thus we obtain $G$ and $M$ by \eqref{eq:gmw}.
Moreover, we obtain $\theta_1$, $\theta_2$ and $\theta_3$
by \eqref{eq:cos2thetai}.
Since $G=a_{20}a_{02}$ and $M=(a_{20}+a_{02})/2$, 
we obtain $a_{20}$ and $a_{02}$.
We remark that
the set
\begin{equation}\label{eq:sinsinsin}
\{
(\theta_1,\theta_2,\theta_3)\in(0,\pi)^3
\,|\,\sin 2\theta_1+\sin 2\theta_2+\sin 2\theta_3
\ne0\}
\end{equation}
is an open and dense subset of $(0,\pi)^3$.
On the other hand,
let us set $f_1(u,v)=(u,v,u^2+2v^2)$ and
$\theta_1=0,\theta_2=\pi/6,\theta_3=-\pi/6$.
Then $\sin 2\theta_1+\sin 2\theta_2+\sin 2\theta_3=0$.
Let $\xi(\theta)=(\cos\theta,\sin\theta,0)$ be a unit vector
and let $k^1_\theta$ be the curvature of the contour
from the direction $\theta$ about $f_1$.
Then $$
k^1_{\theta_1}=4,\quad
k^1_{\theta_2}=k^1_{\theta_3}=\dfrac{50}{7\sqrt{7}}.
$$
Let us set $f_2(u,v)=(u,v,au^2+2v^2)$ and
$\theta_1=0,\theta_2=\pi/4,\theta_3=-\pi/4$.
Then $\sin 2\theta_1+\sin 2\theta_2+\sin 2\theta_3=0$ also holds.
Let $k^2_\theta$ be the curvature of the contour
from the direction $\theta$ about $f_2$.
Then $$
k^2_{\theta_1}=4,\quad
k^2_{\theta_2}=k^2_{\theta_3}=\dfrac
{2 \sqrt{2} a (2 + a)^2}{(4 + a^2)^{3/2}},
$$
and there is a solution $a$ of the equation
\begin{equation}\label{eq:sola}
\dfrac
{2 \sqrt{2} a (2 + a)^2}{(4 + a^2)^{3/2}}-
\dfrac{50}{7\sqrt{7}}=0.
\end{equation}
In fact, the left hand side of \eqref{eq:sola}
is $2 (441 \sqrt{10}-625 \sqrt{7})/1225<0$ when $a=1$,
and it
is $2(98 - 25 \sqrt{7})/49>0$ when $a=2$.
This implies that the condition
$\sin 2\theta_1+\sin 2\theta_2+\sin 2\theta_3\ne0$
cannot be removed from the condition of 
Theorem \ref{thm:threegandm}.

Next let us consider the third order terms of the surface.
Let us take four distinct directions
$$
\theta_1,\ 
\theta_2,\ 
\theta_3,\ 
\theta_4.
$$
Then we obtain $a_{30},a_{21},a_{12},a_{03}$
by $k_{\theta_i}$ $(i=1,2,3,4)$ as follows.
By \eqref{eq:surfcurv} and \eqref{eq:surfcurvq},
we see that
$$
A
\pmt{a_{30}\\ a_{21} \\ a_{12} \\ a_{03}}
=
d,
$$
where $A=(a_1,a_2,a_3,a_4)$ and
\begin{align*}
a_1&=-a_{02}^3
\trans{\Big(
\sin^3\theta_1,\ 
\sin^3\theta_2,\ 
\sin^3\theta_3,\ 
\sin^3\theta_4\Big)},\\
a_2&=3a_{20}a_{02}^2\trans{
\Big(
\sin^2\theta_1\cos\theta_1,\ 
\sin^2\theta_2\cos\theta_2,\ 
\sin^2\theta_3\cos\theta_3,\ 
\sin^2\theta_4\cos\theta_4\Big)},\\
a_3&=-3a_{20}^2a_{02}\trans{
\Big(
\sin\theta_1\cos^2\theta_1,\ 
\sin\theta_2\cos^2\theta_2,\ 
\sin\theta_3\cos^2\theta_3,\ 
\sin\theta_4\cos^2\theta_4\Big)},\\
a_4&=a_{20}^3\trans{
\Big(
\cos^3\theta_1,\ 
\cos^3\theta_2,\ 
\cos^3\theta_3,\ 
\cos^3\theta_4\Big)},\\
d&=\trans{
\Big(
p(\theta_1)^3\dfrac{dk_{\theta_1}}{ds}(0),\ 
p(\theta_2)^3\dfrac{dk_{\theta_2}}{ds}(0),\ 
p(\theta_3)^3\dfrac{dk_{\theta_3}}{ds}(0),\ 
p(\theta_4)^3\dfrac{dk_{\theta_4}}{ds}(0)\Big)
}.
\end{align*}
Since
$\det A=9a_{20}^6a_{02}^6 \prod_{i<j}\sin(\theta_i - \theta_j)$,
and $\theta_1,\ldots,\theta_4$ are distinct, $a_{20}a_{02}\ne0$,
and it holds that
$\det A\ne0$.
By Cramer's rule, 
we get
\begin{equation}\label{eq:a3121}
a_{30}
=
\dfrac{\det A_1}{\det A},\ 
a_{21}
=
\dfrac{\det A_2}{\det A},\ 
a_{12}
=
\dfrac{\det A_3}{\det A},\ 
a_{03}
=
\dfrac{\det A_4}{\det A},
\end{equation}
where 
$A_1=(d,a_2,a_3,a_4)$,
$A_2=(a_1,d,a_3,a_4)$,
$A_3=(a_1,a_2,d,a_4)$,
$A_4=(a_1,a_2,a_3,d)$.
Thus we obtain $a_{30},a_{21},a_{12},a_{03}$
by $k_{\theta_i}$ $(i=1,2,3,4)$.

\subsection{Quadratic curves defined by two directions}
In this section, under the above setting,
we consider the quadratic curve
$$C=C_{12}(M,G)=\{(M,G)\in\R^2\,|\,P_{12}(M,G)=0\}$$
in the $MG$-plane, where
$P_{12}(M,G)$ is defined in \eqref{angle}.
This curve satisfies that 
for two points $(M,G)$ and $(\tilde{M},\tilde{G}) \in C$,
there exist surfaces $f=(u,v,(a_{20}u^2+a_{02}v^2)/2+O(3)$
and
$\tilde{f}=(u,v,(\tilde{a}_{20}u^2+\tilde{a}_{02}v^2)/2+O(3))$
($a_{20}+a_{02}=2M,a_{20}a_{02}=G,
\tilde{a}_{20}+\tilde{a}_{02}=2\tilde{M},
\tilde{a}_{20}\tilde{a}_{02}=\tilde{G}$),
and
there exist $\theta_1, \theta_2, \tilde{\theta}_1, \tilde{\theta}_2$
such that $\theta_1-\theta_2=\tilde{\theta}_1-\tilde{\theta}_2$
and $k_{\theta_i}=\tilde{k}_{\tilde{\theta}_i}$ ($i=1,2$),
where
$\tilde{k}_{\tilde{\theta}_i}$ is the curvature of 
the contour in the direction $\tilde{\theta}_i$ of the
surface $\tilde{f}$.
Since we assume that $a_{20}>a_{02}$ and $a_{20}>0$,
a point on $C$ expresses the unique surface up to second order information.
Hence we have a family of surfaces 
where curvatures of their contours with respect to
two (moving) directions
do not change.
\begin{proposition}
\label{prop:QuadraticCurve}
We have the following:
\begin{enumerate}
\item
The curve\/ $C$ is a hyperbola\/ $($respectively ellipse\/$)$ 
if\/ $G_{ij}>0$ $($respectively\/ $G_{ij}<0)$. 
\item
The curve\/ $C$ is tangent to the\/ $M$-axis. 
\item
If\/ $C$ is a hyperbola, then the two branches of\/ $C$ lie on opposite sides 
of\/ $\{G>0\}$ and\/ $\{G<0\}$.
\end{enumerate}
\end{proposition}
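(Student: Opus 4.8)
The plan is to read the conic type and the branch structure directly off the explicit polynomial $P_{12}(M,G)$ in \eqref{angle}, regarded as a quadratic in the two unknowns $M$ and $G$. Write $\delta=\theta_1-\theta_2$, so that $\sin\delta\neq0$ because $\theta_1\neq\theta_2$, and record the coefficients: the $G^2$, $GM$, and $M^2$ terms carry
$$
A=M_{12}^2-G_{12}\cos^2\delta,\quad
B=-2G_{12}M_{12}\sin^2\delta,\quad
C=G_{12}^2\sin^4\delta,
$$
while the only remaining term is the linear one $D\,G$ with $D=G_{12}^2\cos^2\delta\sin^2\delta$. In particular $P_{12}$ has no constant term and no term linear in $M$, so the origin lies on $C$; this single structural observation drives both the tangency in (2) and the branch separation in (3).

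For part (1), I would classify $C$ by the discriminant of its quadratic part. A short computation gives
$$
B^2-4AC=4G_{12}^3\sin^4\delta\cos^2\delta,
$$
whose sign equals that of $G_{12}$ (in the non-degenerate range $\cos\delta\neq0$, which is exactly what the hypothesis that $C$ is a hyperbola or an ellipse presupposes). Hence $B^2-4AC>0$ for $G_{12}>0$, giving a hyperbola, and $B^2-4AC<0$ for $G_{12}<0$, giving an ellipse.

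For part (2), I would intersect $C$ with the $M$-axis $\{G=0\}$. Setting $G=0$ annihilates the $G^2$, $GM$, and $D\,G$ terms, leaving $P_{12}(M,0)=C\,M^2=G_{12}^2\sin^4\delta\,M^2$, which has the double root $M=0$ since $G_{12}\sin\delta\neq0$. A double contact at the single point $(0,0)$ shows $C$ is tangent to the $M$-axis there; equivalently, the only nonvanishing first-order partial of $P_{12}$ at the origin is $\partial_G P_{12}(0,0)=D\neq0$, so the tangent line to $C$ at the origin is precisely the $M$-axis.

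For part (3), assuming $G_{12}>0$ so that $C$ is a hyperbola, I would fix $G$ and solve $P_{12}=0$ as a quadratic in $M$; its $M$-discriminant is
$$
\Delta_M=(BG)^2-4C(AG^2+D\,G)=(B^2-4AC)G^2-4CD\,G
=4G_{12}^3\sin^4\delta\cos^2\delta\,G\bigl(G-G_{12}\sin^2\delta\bigr).
$$
Real points of $C$ require $\Delta_M\geq0$, and since the leading factor is positive this forces $G\leq0$ or $G\geq G_{12}\sin^2\delta>0$. Thus $C$ misses the open strip $0<G<G_{12}\sin^2\delta$; as each of the two branches is connected, its $G$-values fall in a single one of the two disjoint intervals, so one branch lies in $\{G\leq0\}$ (meeting $G=0$ only at the origin) and the other in $\{G>0\}$, which is the asserted separation. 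The bookkeeping in these three discriminant computations is routine; I expect the factorization of $\Delta_M$ in part (3) to be the crux, since it is the single identity that simultaneously exhibits the forbidden strip in $G$ and locates the tangency at the origin.
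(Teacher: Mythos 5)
Your proposal is correct and follows essentially the same route as the paper: your discriminant $B^2-4AC=4G_{12}^3\sin^4\delta\cos^2\delta$ is just $-4\det Q_{12}$, which is exactly the quantity the paper computes for (1), and your observation that $P_{12}$ and $\partial_M P_{12}$ vanish at the origin while $\partial_G P_{12}(0,0)\ne0$ is the paper's proof of (2). For (3) the paper merely asserts the claim is clear from (1) and (2) (a tangent line separates the two branches of a hyperbola), whereas your $M$-discriminant factorization exhibiting the forbidden strip $0<G<G_{12}\sin^2\delta$ supplies an explicit and correct verification of that step.
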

\begin{proof}
We have $\det Q_{ij}=-G_{ij}^3 \cos^2 (\theta_i-\theta_j) \sin^4 (\theta_i-\theta_j)$, 
and (1) is proved. 
We have $\partial P_{ij}/\partial M(0,0)=0$, which gives (2). 
By (1) and (2), it is clear that the assertion (3) holds.
\end{proof}
The assertion (3) of Proposition \ref{prop:QuadraticCurve} means that the above family
can be divided into
two continuous families
whose Gaussian curvatures are always positive and always negative
respectively.
\begin{example}\label{ex:contoursame}
Let us set
\begin{align*}
a_{20}&=3-\sqrt{3}+\sqrt{11-6\sqrt{3}},&
a_{02}&=3-\sqrt{3}-\sqrt{11-6\sqrt{3}},\\
\tilde{a}_{20}&=-6+\sqrt{37}, &
\tilde{a}_{02}&=-6-\sqrt{37},
\end{align*}
and let us set
\begin{align*}
\theta_1&=(-1/2)\arccos\left(\dfrac{1}{13}
\left(-4\sqrt{11-6\sqrt{3}}-\sqrt{3(11-6\sqrt{3})}\right)\right),\\
\tilde{\theta}_1&=-\dfrac{1}{2}\arccos\left(\dfrac{5}{\sqrt{37}}\right),\\
\end{align*}
$\theta_2=\theta_1+\pi/6$ and 
$\tilde{\theta}_2=\tilde\theta_1+\pi/6$.
Let $f$ and $\tilde{f}$ be two surfaces
defined by 
$$
f=\left(u,v,\dfrac{a_{20}u^2+a_{02}v^2}{2}\right),\quad
\tilde{f}=
\left(u,v,\dfrac{\tilde{a}_{20}u^2+\tilde{a}_{02}v^2}{2}\right).
$$
Then the curvatures of the contours of
$f$ with respect to $\xi(\theta_1)$ and
of
$\tilde{f}$ with respect to $\xi(\tilde{\theta}_1)$ are 1,
and
the curvatures of the contours of
$f$ with respect to $\xi(\theta_2)$ and
of
$\tilde{f}$ with respect to $\xi(\tilde{\theta}_2)$ are 2.
The $C$ of $f$, $\tilde{f}$ can be drawn as in 
Figure \ref{fig:fighyperb}.
See Figures \ref{fig:twocontour1}, \ref{fig:twocontour2}
and \ref{fig:twocontour3} for these surfaces
and their contours.

\begin{figure}[!ht]
\centering
\includegraphics[width=0.5\linewidth]{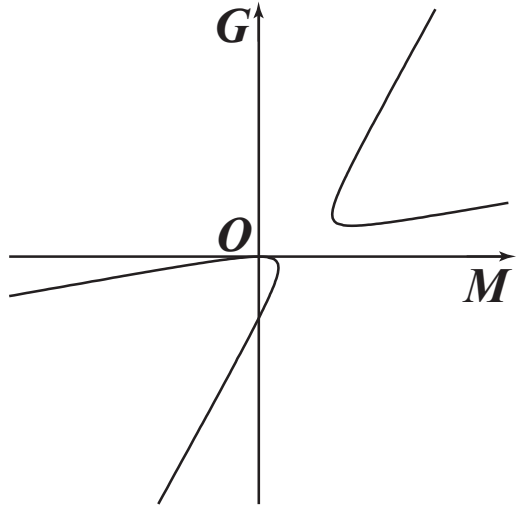}
\caption{The hyperbola $C$\label{fig:fighyperb}}
\end{figure}

\begin{figure}[!ht]
\centering
\includegraphics[width=0.25\linewidth]{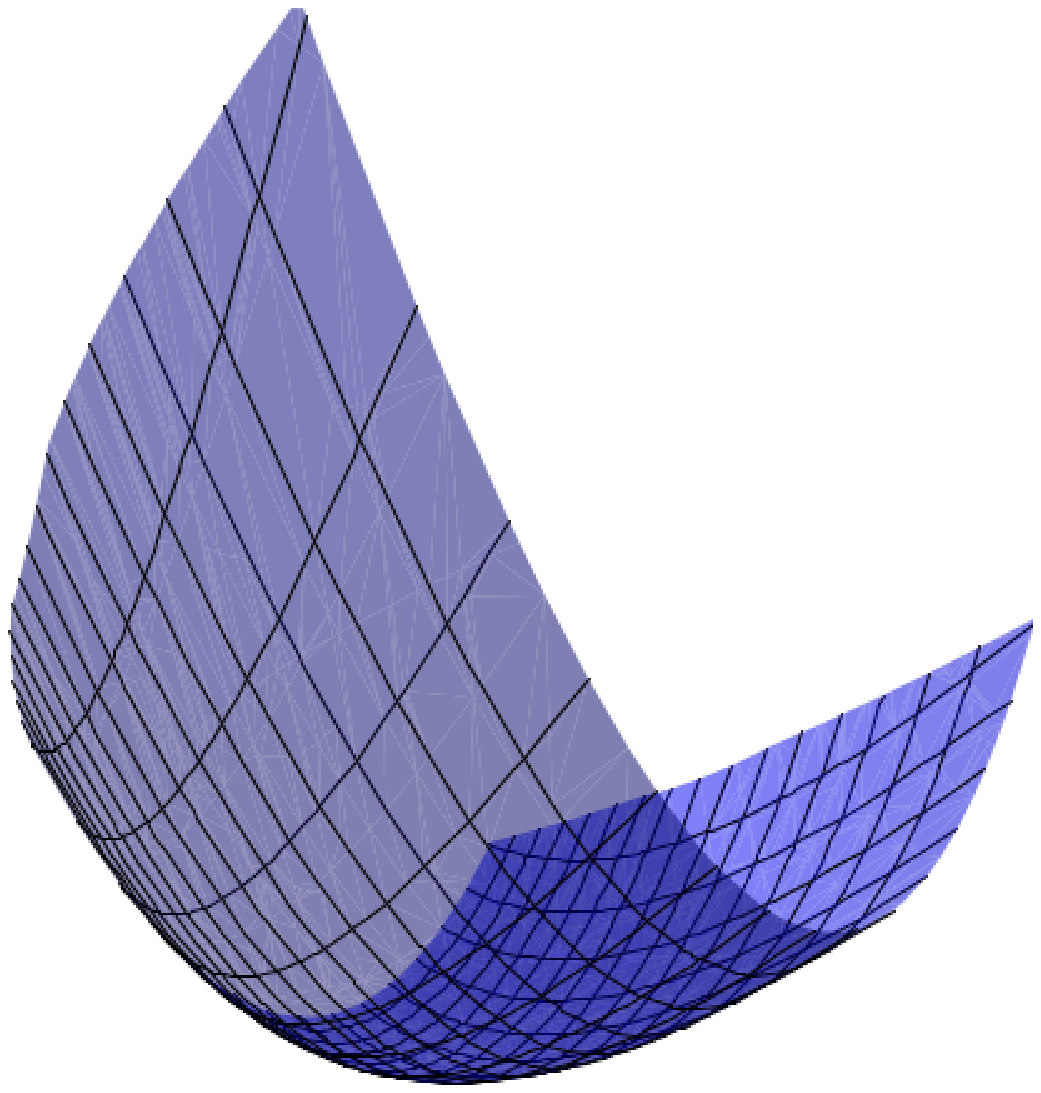}
\hspace{5mm}
\includegraphics[width=0.22\linewidth]{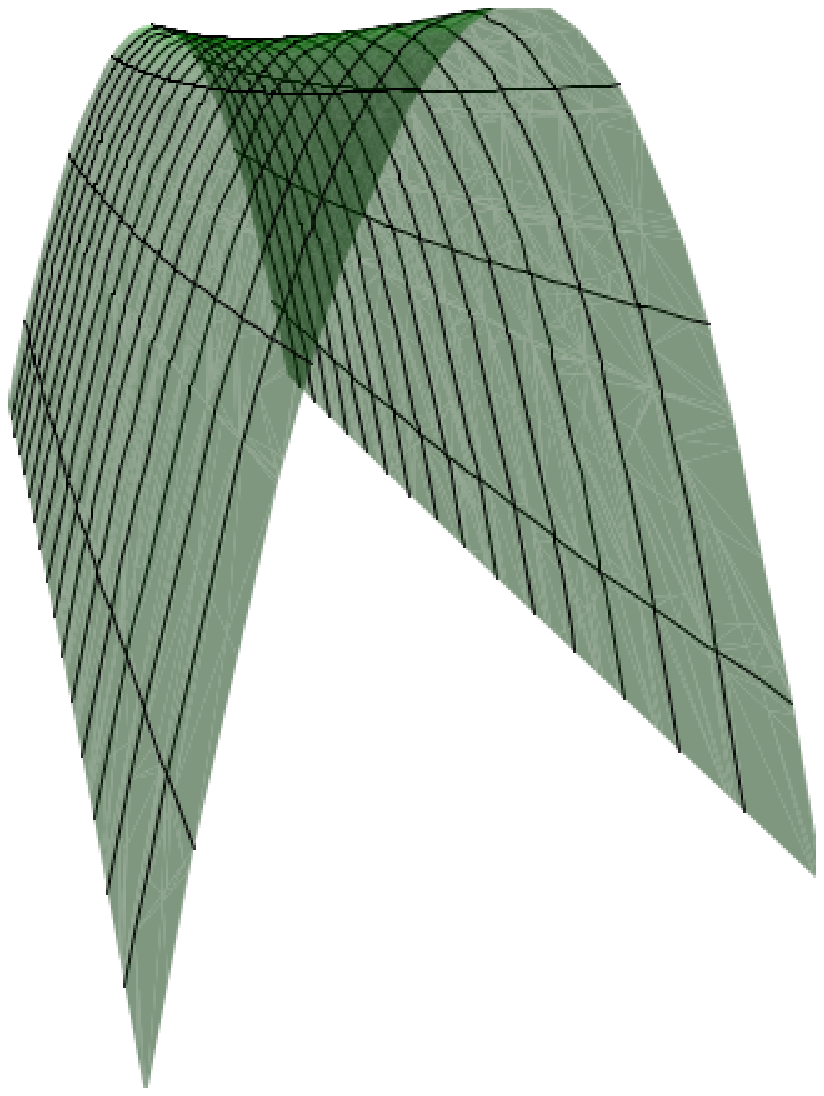}
\caption{The surfaces $f$ (blue) and $\tilde{f}$ (green) viewed from
$\xi(\theta_1)$ and $\xi(\tilde{\theta}_1)$, respectively. 
These contours have the same curvatures.\label{fig:twocontour1}}
\end{figure}

\begin{figure}[!ht]
\centering
\includegraphics[width=0.25\linewidth]{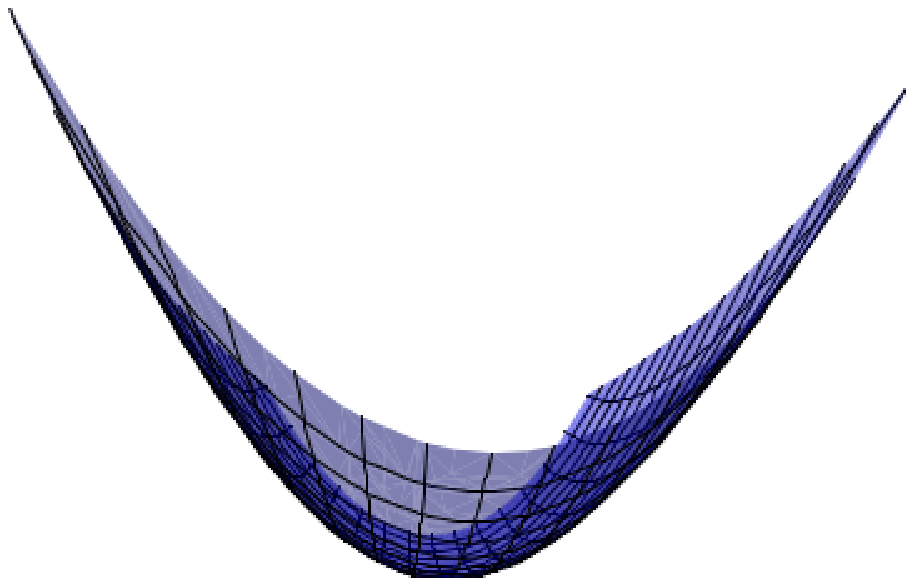}
\hspace{5mm}
\includegraphics[width=0.22\linewidth]{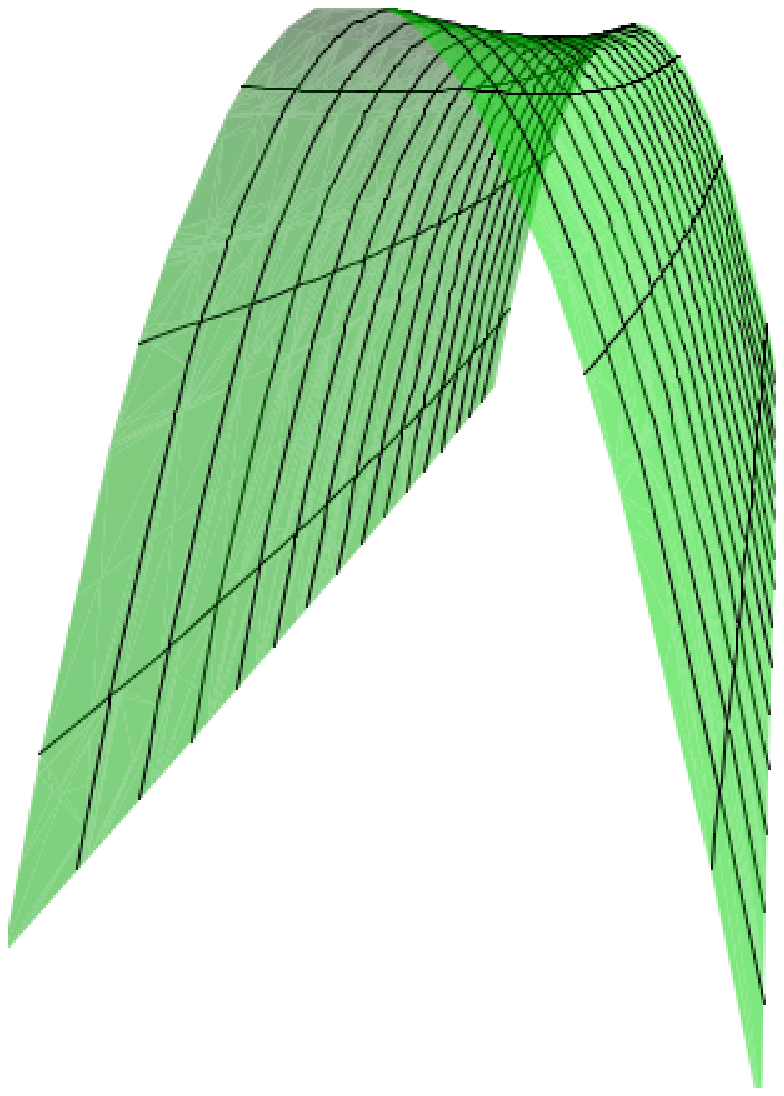}
\caption{The surfaces $f$ and $\tilde{f}$ viewed from
$\xi(\theta_2)$ and $\xi(\tilde{\theta}_2)$, respectively. 
These contours have the same curvatures.\label{fig:twocontour2}}
\end{figure}

\begin{figure}[!ht]\label{fig33}
\centering
\includegraphics[width=0.25\linewidth]{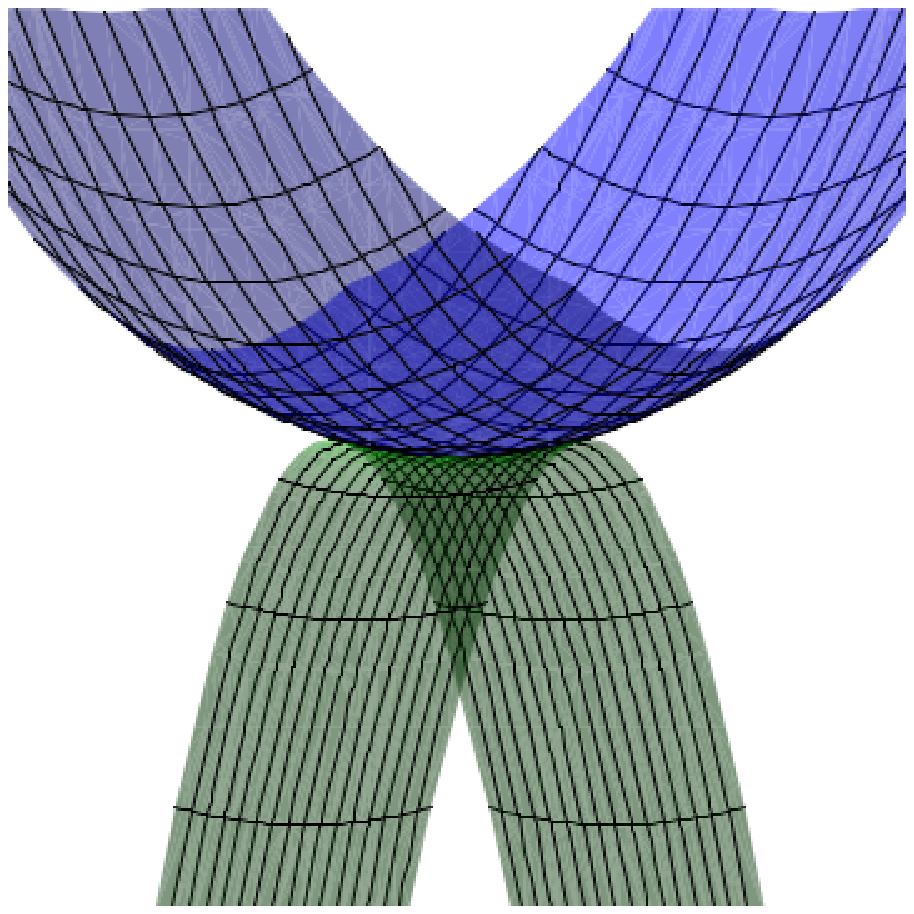}
\hspace{5mm}
\includegraphics[width=0.22\linewidth]{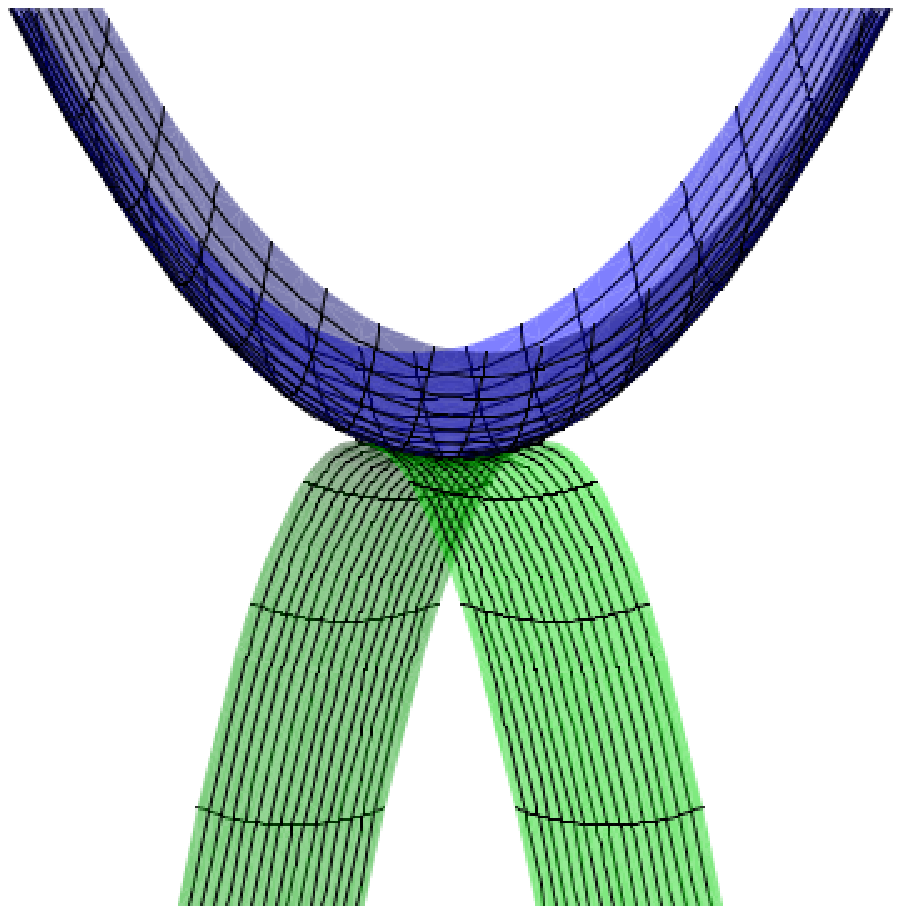}
\caption{
The surface $f$ rotated by angle $-\theta_1$ in the $XY$-plane with
the surface $\tilde{f}$ rotated by angle $-\tilde{\theta}_1$ in the $XY$-plane, and
the surface $f$ rotated by angle $-\theta_2$ in the $XY$-plane with
the surface $\tilde{f}$ rotated by angle $-\tilde{\theta}_2$ in the $XY$-plane, 
viewed from $\xi((1,0,0))$.\label{fig:twocontour3}}
\end{figure}
\end{example}

\subsection{Obtaining Gaussian curvature}
According to Section \ref{sec:surfnp},
we can obtain all of the second order 
information of the surface by 
the contour of projections from three distinct
directions.
In particular, we can obtain the Gaussian curvature.
In this section, we discuss
obtaining just the Gaussian curvature $K$.

By \eqref{eq:surfcurv},
we have 
$$
k_{\theta_1}k_{\theta_2}
=
\dfrac{a_{20}^2a_{02}^2}
{
(a_{20}\cos^2\theta_1+a_{02}\sin^2\theta_1)
(a_{20}\cos^2\theta_2+a_{02}\sin^2\theta_2)
}.
$$
Hence if 
\begin{equation}\label{eq:conj}
\dfrac{a_{20}a_{02}}
{
(a_{20}\cos^2\theta_1+a_{02}\sin^2\theta_1)
(a_{20}\cos^2\theta_2+a_{02}\sin^2\theta_2)
}=1,
\end{equation}
then $K=k_{\theta_1}k_{\theta_2}$.
If $\theta_1$, $\theta_2$ satisfy \eqref{eq:conj},
then we say that
$\xi_{\theta_1},\xi_{\theta_2}$ are {\it contour-conjugate\/}
each other.
Now we consider the existence of the contour-conjugate.
Since \eqref{eq:conj} is equivalent to
\begin{equation}\label{eq:conj2}
a_{20}-a_{02}=0\quad\text{or}\quad
a_{20}\dfrac{\cos^2\theta_2}{\sin^2\theta_2}
=
a_{02}\dfrac{\sin^2\theta_1}{\cos^2\theta_1},
\end{equation}
we have the following proposition. 
\begin{proposition}\label{prop:contconj}
Let\/ $p$ be a point that is not flat umbilic on
a regular surface.
If\/ $p$ is an umbilic point, then any pair of two directions
are contour-conjugates at\/ $p$.
If\/ $K(p)>0$ and\/ $p$ is not an umbilic point, 
then any direction has two contour-conjugates
at\/ $p$,
and
if\/ $K(p)<0$ there are no contour-conjugate for any direction
at\/ $p$.
\end{proposition}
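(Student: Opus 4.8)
The plan is to analyze the contour-conjugacy condition \eqref{eq:conj2} case by case, exploiting the normalization $a_{20}>a_{02}$, $a_{20}>0$ already fixed at the start of Section \ref{sec:surf}, together with the sign relations $G=a_{20}a_{02}$ between the Gaussian curvature and the principal-axis coefficients. The key observation is that \eqref{eq:conj} has been reduced to the disjunction \eqref{eq:conj2}, so the entire proposition becomes an elementary case analysis on the signs of $a_{20}$ and $a_{02}$; no further differential geometry is needed beyond what the preceding lemma already supplies.

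First I would dispose of the umbilic case. If $p$ is umbilic (and not flat umbilic), then $a_{20}=a_{02}\neq 0$, so the first alternative $a_{20}-a_{02}=0$ in \eqref{eq:conj2} holds identically; hence \emph{every} pair of directions is contour-conjugate, which is the first assertion. For the non-umbilic case we have $a_{20}\neq a_{02}$, so the first alternative fails and contour-conjugacy is governed entirely by the second equation
$$
a_{20}\frac{\cos^2\theta_2}{\sin^2\theta_2}
=
a_{02}\frac{\sin^2\theta_1}{\cos^2\theta_1}.
$$
The natural move here is to treat this as an equation determining $\theta_2$ once $\theta_1$ is given: writing $\cot^2\theta_2 = (a_{02}/a_{20})\tan^2\theta_1$, a solution $\theta_2\in(0,\pi)$ exists precisely when the right-hand side is nonnegative, i.e. when $a_{02}/a_{20}\geq 0$.

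Next I would translate this solvability condition into the sign of $K$. Since $K(p)=G=a_{20}a_{02}$ and $a_{20}>0$ under our normalization, the ratio $a_{02}/a_{20}$ is positive exactly when $K(p)>0$ and negative exactly when $K(p)<0$. Thus when $K(p)>0$ the quantity $(a_{02}/a_{20})\tan^2\theta_1$ is a positive real number (for $\theta_1\neq 0,\pi/2$; the degenerate values are handled directly by inspecting \eqref{eq:conj2}), and $\cot^2\theta_2$ equals this value, yielding exactly two solutions $\theta_2\in(0,\pi)$ symmetric about $\pi/2$ — giving the claimed two contour-conjugates for each direction. When $K(p)<0$ the right-hand side is negative while the left-hand side $\cot^2\theta_2$ is nonnegative, so no solution exists, establishing the final assertion.

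The only genuine subtlety — and the part I would be most careful with — is the bookkeeping at the boundary values $\theta_1\in\{0,\pi/2\}$ and the corresponding degeneracies where $\cos\theta_1$ or $\sin\theta_1$ vanishes; these make the reciprocals in \eqref{eq:conj2} ill-defined and must be read back against the original condition \eqref{eq:conj} rather than its reformulation. I would verify that in these edge cases the count of solutions (two for $K>0$, none for $K<0$) still holds by returning to \eqref{eq:conj} directly, and that the hypothesis $p(\theta_i)\neq 0$ implicit from the earlier lemma guarantees the contour curvatures $k_{\theta_i}$ are well-defined so that the product $k_{\theta_1}k_{\theta_2}$ genuinely equals $K$. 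Everything else is a short sign computation.
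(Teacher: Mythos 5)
Your argument is correct and is essentially the paper's own: the authors derive the equivalence of \eqref{eq:conj} with \eqref{eq:conj2} and then state the proposition as an immediate consequence, which is exactly the sign analysis on $a_{02}/a_{20}$ (equivalently on $K=a_{20}a_{02}$ with $a_{20}>0$) that you carry out. The only caveat is the edge case you already flag: at a principal direction ($\cos\theta_1=0$ or $\sin\theta_1=0$) the second equation in \eqref{eq:conj2} forces the conjugate to be the other principal direction, so the count of ``two'' contour-conjugates must be interpreted with care there --- a looseness present in the paper's statement as well, not a defect of your proof.
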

\begin{example}
Let us set
$$
f(u,v)=\left(u,v,\dfrac{u^2}{2}+v^2\right)
$$
and
$$
\theta_1=\pi/4,\quad
\theta_2=\operatorname{arccot}(\sqrt{2}).
$$
Then, 
since $\theta_1$ and $\theta_2$ satisfy
\eqref{eq:conj2},
they are contour-conjugate (see Figure \ref{fig:contourconj}),
namely, 
the product 
of the curvatures with respect to these directions
equals $2$, the Gaussian curvature of $f$ at $0$.

\begin{figure}[!ht]
\centering
\includegraphics[width=0.3\linewidth]{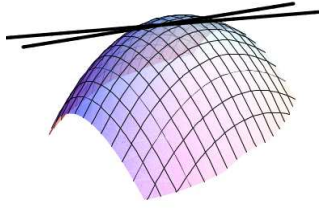}
\caption{Contour-conjugate directions\label{fig:contourconj}}
\end{figure}
\end{example}

\section{Normal curvature and Euler's formula}
In this appendix, we give 
a similar formula to Theorem \ref{thm:threegandm}
for the normal curvatures.
To obtain $a_{20},a_{02}$,
we have another expression by using
Euler's formula (see \cite[p 214]{oneill}, for example).
In the same setting as in Section \ref{sec:surfnp},
let $\theta_1$, $\theta_2$, $\theta_3$ be
the distinct angles $(0<\theta_1,\theta_2,\theta_3<\pi)$. 
Let $k_i^n$ $(i=1,2,3)$ be
the normal curvatures of $f$ with respect to $\xi(\theta_i)$,
and let $M^n_{ij}=(k_i^n+k_j^n)/2$, $G^n_{ij}=k_i^nk_j^n$.
By Euler's formula, 
we have
\begin{align}\label{EL}
\sin^4(\theta_i-\theta_j)M^2
-2M_{ij}^n\sin^2(\theta_i-\theta_j) M
+\cos^2 (\theta_i-\theta_j) \sin^2 (\theta_i-\theta_j)G\\
\hspace{15mm}
+\left((M_{ij}^n)^2-G_{ij}^n\cos^2 (\theta_i-\theta_j)\right)
=0.\nonumber
\end{align}
For $ij=12,23,31$, these equations form
a linear system
\begin{align*}
&\pmt{
\sin^4(\theta_1-\theta_2) &
-2M_{12}^n\sin^2(\theta_1-\theta_2) &
\cos^2 (\theta_1-\theta_2) \sin^2 (\theta_1-\theta_2) \\ 
\sin^4(\theta_2-\theta_3) &
-2M_{23}^n\sin^2(\theta_2-\theta_3) &
\cos^2 (\theta_2-\theta_3) \sin^2 (\theta_2-\theta_3) \\ 
\sin^4(\theta_3-\theta_1)&
-2M_{31}^n\sin^2(\theta_3-\theta_1)&
\cos^2 (\theta_3-\theta_1) \sin^2 (\theta_3-\theta_1)}
\pmt{M^2 \\ M \\ G}\\
&\hspace{10mm}
=\pmt{
(M_{12}^n)^2-G_{12}^n\cos^2 (\theta_1-\theta_2) \\ 
(M_{23}^n)^2-G_{23}^n\cos^2 (\theta_2-\theta_3) \\ 
(M_{31}^n)^2-G_{31}^n\cos^2 (\theta_3-\theta_1)}.
\end{align*}
By Cramer's rule, 
we have the expressions 
$$
M=
\dfrac{\det P^n}{\det V^n},\quad
G=
\dfrac{2\det L^n}{\det V^n}
$$
under the condition $\det V^n\ne0$,
where
$
V^n=\big(x_5,x_3,x_4\big)$,
$L^n=\big(x_6,x_5,x_4\big)$,
$P^n=\big(x_6,x_3,x_4\big)$,
$x_3,x_4$ are in \eqref{eq:x3}, \eqref{eq:x4}
respectively,
and
\begin{align*}
x_5&=\trans{\big(
M_{12}^n,M_{23}^n,M_{31}^n\big)}\\
x_6&=\trans{\left(
\dfrac{(M_{12}^n)^2-G_{12}^n\cos^2 (\theta_1-\theta_2)}
{\sin^2 (\theta_1-\theta_2)},
\dfrac{(M_{23}^n)^2-G_{23}^n\cos^2 (\theta_2-\theta_3)}
{\sin^2 (\theta_2-\theta_3)},\right.}\\
&\hspace{60mm}
\left.\dfrac{(M_{31}^n)^2-G_{31}^n\cos^2 (\theta_3-\theta_1)}
{\sin^2 (\theta_3-\theta_1)}\right).
\end{align*}
\section*{Competing Interests}
The authors declare that they have no competing
interests.
\section*{Author's Contributions}
The current paper was jointly developed by the three authors 
in the seminar discussions. All the authors equally contributed.


\medskip
{\footnotesize
\begin{flushright}
\begin{tabular}{l}
(Hasegawa)\\
Depart of Information Science,\\
Center for Liberal Arts and Sciences,\\
Iwate Medical University, \\
2-1-1 Idaidori, Yahaba-cho, Shiwa-gun, Iwate,\\
028-3694, Japan\\
E-mail: {\tt mhaseO\!\!\!aiwate-med.ac.jp}\\
\\
(Kabata)\\
School of Information and Data Sciences,\\
Nagasaki University, \\
Bunkyocho 1-14, Nagasaki\\
852-8131, Japan\\
E-mail: {\tt kabataO\!\!\!anagasaki-u.ac.jp}\\
\\
(Saji)\\
Department of Mathematics,\\
Graduate School of Science, \\
Kobe University, \\
Rokkodai 1-1, Nada, Kobe \\
657-8501, Japan\\
  E-mail: {\tt sajiO\!\!\!amath.kobe-u.ac.jp}
\end{tabular}
\end{flushright}}
\end{document}